\newtheorem{theorem}{Theorem}
\theoremstyle{plain}
\newtheorem{corollary}{Corollary}
\newtheorem{lemma}{Lemma}
\newtheorem{proposition}{Proposition}
\numberwithin{equation}{section}
\theoremstyle{definition}
\newtheorem{definition}{Definition}
\newtheorem{example}{Example}
\newtheorem*{acknowledgement}{Acknowledgement}
\theoremstyle{remark}
\newtheorem*{remark}{Remark}
\newtheorem*{note}{Note}
\newcommand{\cindex}[1]{\ensuremath{\mathcal{I}(#1)} }
\newcommand{\Inv}[1]{\ensuremath{\operatorname{Inv}(#1)}}
\newcommand{\Inter}[1]{\ensuremath{\operatorname{Int}(#1)}}
\newcommand{\Spec}[1]{\ensuremath{\operatorname{Spec}(#1)}}
\newcommand{\exhaust}{asymptotically-invariant exhausting }
\newcommand{\goodvf}{permissible }
\begin{document}

\title{On the stable Conley index in Hilbert spaces}     %% For first page.
\author{Tirasan Khandhawit}              %% Include coauthors if any.
\date{} 
\address{Kavli Institute for the Physics and Mathematics of the Universe (WPI), Todai Institutes for Advanced
Study, The University of Tokyo, 5-1-5 Kashiwa-No-Ha, Kashiwa, Chiba 277-8583, Japan}
\email{tirasan.khandhawit@ipmu.jp}
\urladdr{} 

\begin{abstract}
In this paper, we study Conley theory in Hilbert spaces and make some refinement of the construction of the stable Conley index developed by G\c{e}ba, Izydorek, and Pruszko. For instance, we allow subspaces other than invariant subspaces in the the construction. As a main result, we show that the resulting stable Conley index for a given  isolating neighborhood of a flow does not depend on choices in the construction. We also hope that some results can be applied to the construction of Floer homotopy type.  
\end{abstract}

\maketitle
%\tableofcontents

%\pagestyle{plain}

\section{Introduction}
Conley index theory concerns about some homotopy invariant of a flow and its isolated invariant set. One might view Conley index theory as a generalization of Morse theory.
However, Conley index theory  was developed for a flow on a locally compact space, and one cannot apply it directly in an infinite-dimensional setting. In 1999, G\c{e}ba, Izydorek, and Pruszko \cite{Geba} developed a  generalization of Conley index theory for a special class of flows on a  Hilbert space. In particular, they associated  the stable Conley index, as a stable equivalence class of spectra, to an isolating neighborhood of such a flow. 

The main idea of the construction is finite dimensional approximation of the Conley index. More specifically, one considers a flow compressed on sufficiently large finite-dimensional subspaces and tries to obtain the Conley index for each of these subspaces. A collection of these Conley indices then forms an object in a stable homotopy category. 
%Related ideas of finite dimensional approximation for maps between Hilbert spaces date back to the work of {\v{S}}varc \cite{Svarc} work in 1964.
% and  Furuta's \cite{Furu} in Seiberg-Witten context. 

In this paper, we will consider a flow  which is generated by a vector field $F$ with a special form. For example, we suppose that there is a decomposition $F = L + Q$ where $L$ is bounded linear and $Q$ is (possibly nonlinear) compact. When $V$ is an $L$-invariant subspace, we can look at a \textit{compressed vector field} $L + \pi_V Q$, where $\pi_V$ is the orthogonal projection onto $V$. 

One of the goals of this paper is to include subspaces which are not necessarily \(L\)-invariant in the construction. In particular, we characterize a class of subspaces which are suitable for finite dimensional approximation (see Proposition~\ref{prop indep}). As a main result, we show that the resulting stable Conley index does not depend on choices in the construction such as the choice of decompositions of $F$ and the choice of subspaces. The following result is a direct consequence of Corollary~\ref{cor invariance}. 

\begin{theorem} Let $X$ be an isolating neighborhood with respect to a flow generated by $F$ on a Hilbert space. Suppose that there is a permissible decomposition $F = L + Q$ and that $L$ admits an \exhaust sequence.

Then, we can construct a stable Conley index $E(X,F,L)$ which is a spectrum and is well-defined up to stable homotopy equivalence. Different choices of $L$ only change $E(X,F,L)$ by suspension of spheres.
\end{theorem}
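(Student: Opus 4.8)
The plan is to construct $E(X,F,L)$ by finite-dimensional approximation and then extract all the asserted invariance properties from Proposition~\ref{prop indep} and Corollary~\ref{cor invariance}. First I would fix the permissible decomposition $F = L + Q$ together with an \exhaust sequence $\{V_n\}$ for $L$; by Proposition~\ref{prop indep} each $V_n$ is, for $n$ large, a \nicef subspace, so the associated compressed vector field on $V_n$ has $X \cap V_n$ as an \isoln and hence a well-defined finite-dimensional Conley index $h(X \cap V_n)$. The passage from $V_n$ to $V_{n+1}$ contributes a suspension by a sphere built from the orthogonal complement $V_{n+1} \ominus V_n$, and this is exactly a structure map in the relevant category of spectra; assembling the $h(X \cap V_n)$ along these maps, after formally desuspending by the finite-dimensional reference spheres attached to the \exhaust sequence, produces the spectrum $E(X,F,L)$.

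Next comes well-definedness up to stable homotopy equivalence. The data entering the previous step are the \exhaust sequence, the particular \nicef subspaces employed, and --- with $L$ held fixed --- the compact term $Q$. Corollary~\ref{cor invariance} asserts precisely that the resulting spectrum is independent of all of these up to stable homotopy equivalence, so $E(X,F,L)$ depends, up to equivalence, only on the triple $(X,F,L)$. The mechanism behind that corollary, which I would simply quote, is a continuation argument: any two admissible sets of choices can be joined, over a common cofinal family of \nicef subspaces, by a path of compressed flows for which $X \cap V_n$ stays an \isoln; Conley index is invariant under such continuations, and the resulting comparison maps are compatible with the structure maps, so they glue to a stable equivalence of spectra.

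Finally, the dependence on $L$. If $F = L + Q = L' + Q'$ are two permissible decompositions, then $L - L' = Q' - Q$ is compact. I would choose an \exhaust sequence $\{V_n\}$ serving $L$ and $L'$ simultaneously and whose members are \nicef for both decompositions; on each $V_n$ the two compressed vector fields then differ by a finite-rank linear term plus a compact correction, which does not affect the Conley index once one interpolates linearly between the corresponding hyperbolic splittings. What does change is the finite-dimensional reference sphere used in the desuspension, since the negative spectral subspaces of $L$ and of $L'$ restricted to $V_n$ need not agree; but compactness of $L - L'$ forces the difference of their dimensions to be a fixed integer for all large $n$. Hence $E(X,F,L')$ and $E(X,F,L)$ coincide after smashing each with a suitable finite-dimensional sphere, that is, they differ only by suspension of spheres.

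The step I expect to be the main obstacle is this last one: producing a single \exhaust sequence that is \nicef for two different permissible decompositions at once, and then tracking precisely how the finite-dimensional reference spheres --- equivalently, the negative spectral subspaces of $L|_{V_n}$ versus $L'|_{V_n}$ --- evolve as $n \to \infty$, so that the comparison maps on the $h(X \cap V_n)$ are genuinely compatible with the spectrum structure maps. The remaining parts amount to invoking Proposition~\ref{prop indep} for the construction and Corollary~\ref{cor invariance} for the independence statements.
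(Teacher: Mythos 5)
Your proposal follows essentially the same route as the paper (fix an \exhaust sequence, compress onto \nicef subspaces to get finite-dimensional Conley indices, desuspend by the negative spectral subspaces, and appeal to Proposition~\ref{prop indep} and Corollary~\ref{cor invariance} for independence of choices). Two small but genuine inaccuracies are worth flagging.

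First, you assert that ``by Proposition~\ref{prop indep} each $V_n$ is, for $n$ large, a \nicef subspace,'' but the members of an \exhaust sequence need not lie in $\mathcal{V}_{L,Q}$: condition~(i) of Proposition~\ref{prop indep} requires $V \supset \ker L$, and neither $\pi_{V_n}\to 1$ pointwise nor $[L,\pi_{V_n}]\to 0$ forces $V_n$ to actually contain $\ker L$ for large $n$. The paper handles this via Lemma~\ref{lemma cofinal}, which enlarges an $E_n$ from the \exhaust sequence so that it contains any prescribed finite-dimensional subspace (in particular $\ker L$) while keeping the commutator small; the conclusion is cofinality of $\mathcal{V}_{L,Q}$, not that the \exhaust sequence itself lands in $\mathcal{V}_{L,Q}$.

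Second, your treatment of the $L$-dependence --- interpolating between the two compressed fields and arguing the Conley index does not change, then invoking compactness of $L-L'$ to force a stable dimension jump --- is more indirect than necessary. The paper's key observation is that $F_V$ restricted to $V$ equals $\pi_V F$, which is manifestly independent of the chosen decomposition; hence the finite-dimensional Conley index $\cindex{X\cap V,\eta_V}$ is literally the same space, and the \emph{only} dependence on $L$ is through the reference sphere $V^-$ used in the desuspension. That the integer $\dim V^- - \dim V'^-$ is independent of $V$ then follows at once from Corollary~\ref{cor invariance} applied to each decomposition separately, rather than from an ad hoc spectral-flow argument. Neither gap is structural, but both should be repaired for the argument to close.
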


The main application of the stable Conley index is in the study of Hamiltonian systems as in \cite{Geba}.  We mention here that there is another construction of the stable Conley index in the context of gauge theory. In 2003, Manolescu \cite{Man1} also used the idea of finite dimensional approximation of the Conley index to construct a stable homotopy type of the Seiberg-Witten Floer homology for rational homology 3-spheres (see some subsequent works in \cite{TK1}, \cite{Man13-1}, and \cite{ManK}). We point out some differences in the setup here.
\begin{itemize}
\item A vector field in the Seiberg-Witten theory is of the form $L + Q$ where $L$ is a self-adjoint elliptic differential operator and $Q$ is a quadratic map.
\item In gauge theory, one often needs to work on the Sobolev completion of a space of sections. As a result, a flow is only partially defined because the vector field maps the $L^2_k$ completion to the $L^2_{k-1}$ completion, not to itself. Nevertheless, one can describe a trajectory as a solution of the equation $-\frac{d}{dt}(\eta(t)) = (L+Q)(\eta(t))$.
\item One needs to be more careful about the class of subspaces and projections because several Sobolev completions are involved. For example, we might need to consider only a subspace of smooth section so that the corresponding projection is a smoothing operator and orthogonal with respect to some $L^2_k$ inner product.
\end{itemize}

Despite differences in technical arguments between the two contexts, some results have similar features. For example, the fact that an infinite-dimensional isolating neighborhood is also an isolating neighborhood for compressed flows on sufficiently large subspaces holds in both context (compare Proposition~\ref{prop indep} with \cite[Proposition~3]{Man1} and \cite[Proposition~11]{TK1}). We hope that other results (specially in Section~\ref{sec stable}) of this paper  can be applied to the Seiberg-Witten case. 

% For an overview, we give a background of classical Conley index theory in Section~\ref{sec conley}. Section~\ref{sec vecf} and Section~\ref{sec continuity} provide basic definition and relevant results. Compression of a flow on finite-dimensional subspaces is discussed in Section~\ref{sec compress} and the construction of stable Conley index is concluded in Section~\ref{sec stable}. 

\begin{acknowledgement}  This was part of the author's Ph.D. thesis at Massachusetts Institute of Technology. The author would like to gratefully thank Tom Mrowka for advising and support during the graduate study. This work was supported by World Premier International Research Center Initiative (WPI), MEXT, Japan.
\end{acknowledgement} 

%%%%%%%%%%%%%%%%%%%%%%% 
\section{Classical Conley index theory} \label{sec conley}

We give some background in Conley index theory. A thorough treatment can be found in \cite{Conley} and \cite{SalaCon}. Let $\Omega$ be a locally compact, Hausdorff topological space. A \emph{flow} on $\Omega$ is a continuous map $\eta : \Omega \times \mathbb{R} \rightarrow \Omega$ such that $\eta(x,0) = x$ and $\eta(x, s + t) = \eta(\eta(x,s),t)$ for all $x \in \Omega$ and $s,t \in \mathbb{R}$. We will denote the image $\eta(x,t)$ by  $x \cdot t$ when the flow is understood.
 
\begin{definition} \
\begin{enumerate}
% \item  $A^+(M) := \left\{ x \in M | x \cdot \mathbb{R}^+ \subset M \right\}$ and $A^-(M) := \left\{ x \in M | x \cdot \mathbb{R}^- \subset M \right\}$.
        \item The \emph{maximal invariant subset} \(\Inv{X,\eta}\) of a subset $X \subset \Omega$ is a set of points whose flow line stays in \(X\) for all time, i.e. $\Inv{X,\eta} = \left\{ x \in X | x \cdot \mathbb{R} \subseteq X \right\}$.
        \item A compact subset $X$ of $\Omega$ is called an \emph{isolating neighborhood}  if its maximal invariant subset $\Inv{X,\eta}$ is contained in its interior $\Inter{X}$.
                 \item A compact subset $S$ of $\Omega$ is called an \emph{isolated invariant set} if there is an isolating neighborhood $X$ so that $\Inv{X,\eta} = S$.
\end{enumerate}
\end{definition}
     
The main feature of Conley index theory is that one can associate some topological invariants to an isolated invariant set (or an isolating neighborhood) together with the flow. To proceed, we introduce the important concept of an index pair.

\begin{definition} \label{indexp1}
Let $S$ be an isolated invariant set. A pair of compact subsets $(N,L)$ is called an index pair for $S$ if the following conditions hold
\begin{enumerate}
        \item $S \subset N \backslash L $ (this implies $S = \Inv{N \backslash L , \eta}$),
        \item $L$ is positively invariant relative to $N$ (i.e. if  $x \in L$ and $x \cdot [0,t] \subset N$, then $x \cdot [0,t] \subset L$),
        \item $L$ is an exit set for $N$ (i.e. if $x \in N $ but $x \cdot [0, \infty)  \nsubseteq N$, then there exists $t>0$ such that $x \cdot [0,t] \subset N$ and $x \cdot t \in L$).
\end{enumerate}
\end{definition}

For an isolating neighborhood $X$ with $\Inv{X,\eta} = S$, we will also called $(N,L)$ an index pair for $X$ if it is an index pair for $S$. This definition does not depend on $X$ but it is sometimes more convenient to emphasize the isolating neighborhood instead of the isolated invariant set. 

Two fundamental results in Conley theory state that, given a fixed isolated invariant set (or a fixed isolating neighborhood), an index pair always exists and that all the pointed spaces of the form $(N/L , [L])$, where $(N,L)$ is an index pair, are homotopy equivalent. This leads to the following definition.
\begin{definition} For an isolated invariant set $S$, we define its (homotopy) Conley index $\cindex{S , \eta}$  to be a homotopy type of a pointed space $ (N/L , [L])$ where $(N,L)$ is an index pair for  $S$. For an isolating neighborhood \(X\) with $\Inv{X,\eta} = S$, we also define \(\cindex{X,\eta} := \cindex{S,\eta} \).  

\end{definition}

The Conley index enjoys several properties such as invariance under continuous deformation (also known as the continuation property). We will later need the following definition of a product flow.

\begin{definition} Let \(\eta_1 , \eta_2\) be a flow on \(\Omega_1 , \Omega_2\) respectively. A product flow \(\eta_1 \times \eta_2 \) is  a flow on \(\Omega_1 \times \Omega_2 \) defined by $(x_1 , x_2 ) \cdot t \mapsto (\eta_1(x_1,t),\eta_2 (x_2,t))$.
\end{definition}

It is not hard to check that, for $i=1,2$, if $X_i$ is an isolating neighborhood of $S_i$ with respect to the flow $\eta_i$ on $\Omega_i$, then $X_1 \times X_2$ is an isolating neighborhood for $S_1 \times S_2$ with respect to the product flow. Moreover, if $(N_i ,L_i )$ is an index pair for $S_i$, then $(N_1 \times N_2 , N_1 \times L_2 \,\cup \, L_1 \times N_2)$ is an index pair of $S_1 \times S_2$. Consequently, the Conley index of the product $S_1 \times S_2$ is given by a smash product $(N_1/L_1 , [L_1]) \wedge (N_1/L_1 , [L_1])$.

We give some simple examples of index pairs and Conley indices. 

\begin{example} 
Let $V$ be a finite-dimensional vector space and $L$ be a self-adjoint linear map on \(V\) to itself.  Consider a flow $\eta$ given by a formula $\eta(v,t) = e^{-tL} v$. A flow of this form is sometimes known as a \emph{linear flow} because $\eta (v,t)$ is an integral curve of  the ODE  
\begin{align} \label{eq linearflow}
\frac{\partial}{\partial t} \eta (v,t) &=  - L (\eta(v,t)) .
\end{align}
For simplicity, we will assume that $L$ has no kernel.

One can see that $\left\{0\right\}$ is an isolated invariant set of this flow. Let us decompose $V = V^+ \oplus V^-$ to the positive eigenspace and the negative eigenspace with respect to $L$. One can check that $( B(V^+) \times B(V^-) , B(V^+ ) \times S(V^- ) )$ is an index pair for $\left\{ 0 \right\}$, where we denote by $B(W )$ and $S(W)$ a unit disk and a unit sphere in $W$ respectively. Consequently, the Conley index has a homotopy type of $B(V^- ) / S(V^- )$, which can be identified with  $S^{V^-}$ the compactification of $V^-$ with a basepoint at infinity.

% Alternatively, we can show that $B(V)$ is an isolating block of the origin. On the unit sphere, a point is leaving $B(V)$ if $\langle Lv , v \rangle < 0 $ and is entering $B(V)$ if $\langle Lv , v \rangle > 0 $. When $\langle Lv , v \rangle = 0 $, we consider the second derivative of the norm a trajectory $\gamma_L (t)$
% \[\frac{d}{dt}\left\Vert \gamma_L (t) \right\Vert^2 = 4 \langle L \gamma_L (t) , L \gamma_L (t) \rangle = 4 \left\Vert L \, \gamma_L (t) \right\Vert^2. \]
% This is always positive (except at the origin) because $L$ has no kernel. It follows that a point on $S(V)$ with $\langle Lv , v \rangle = 0 $ is also leaving $B(V)$ as a bounce-off point.     
\label{ex linearflow} 
\end{example} 

\begin{remark} The negative sign in (\ref{eq linearflow}) is introduced to follow the downward-gradient flow convention. In particular, the flow $\eta$ is the downward gradient flow of a functional \(f(v) = \frac{1}{2}\langle Lv , v \rangle \) on \(V\).
\end{remark}

%A family of flows.

%%%%%%%%%%%%%%%%%%%%%%% 
\section{Flows generated by permissible vector fields on a Hilbert space} \label{sec vecf}
 
\subsection{Permissible vector fields} For the rest of the paper, we let $H$ be a Hilbert space. A vector field on $H$ is a continuous map from $H$ to itself. We will be interested in a special class of vector fields. 

\begin{definition} \label{defgoodvf}
We say that a vector field $F : H \rightarrow H$ is \emph{\goodvf}if $F$ admits a decomposition $F = L + Q$ such that
\begin{enumerate}
        \item $L$ is a bounded self-adjoint Fredholm operator on $H$.   
        \item $Q$ is locally Lipschitz and compact (possibly nonlinear).
        \item there exist positive constants $c_{1},c_{2}$ so that $\left\| Q(x) \right\| \leq c_{1} \left\| x \right\| + \frac{c_{2}}{1 + \left\|x\right\|}$ for all $x \in H$
        %\item $F$ is subquadratic i.e. there exist positive constants $c,d$ so that $2 \left|\left\langle F(x) , x \right\rangle \right| \leq c \left|x \right|^2 + d $  for all $x \in H$.
         
\end{enumerate}
We will also say that a pair $(L,Q)$, or simply $F = L+Q$, is a \emph{\goodvf decomposition} for $F$ if it satisfies the above conditions. In addition, $L$ and $Q$ will be referred as a linear part and a compact part of $F$ respectively.

\end{definition}

Given two different \goodvf decomposition $F = L_1 + Q_1 = L_2 + Q_2$, we see that the difference $L_1 - L_2$ is compact. In fact, this gives a one-to-one correspondence between a set of \goodvf decompositions of $F$ and a space of linear self-adjoint compact operators. 

We will also extend this notion to a family of vector fields. 

\begin{definition}
Let $\Lambda$ be a metric space, regarded as a parameter space. We say that a family of vector fields $F: H \times \Lambda \rightarrow H$ is a \emph{\goodvf family} of vector fields if there is a decomposition $F(x,\lambda) = L(x, \lambda ) + Q(x,\lambda)$, where we sometimes write $F_\lambda$ for the restriction $F(\cdot , \lambda )$ to a point in $\lambda \in \Lambda$,
satisfying the following properties
\begin{enumerate}
        \item For each $\lambda$, the decomposition $F_\lambda  = L_\lambda  + Q_\lambda$ is a \goodvf decomposition.
        \item $L_\lambda$ is a continuous family in the norm topology of bounded linear operators.
        \item $Q : H \times \Lambda \rightarrow H$ is compact.
        \item There exist positive constants $C_{1},C_{2}$ so that $\left\| Q(x, \lambda) \right\| \leq C_{1} \left\| x \right\| + \frac{C_{2}}{1 + \left\|x\right\|}$   for all $(x,\lambda) \in H \times \Lambda $. \end{enumerate}

%Such a decomposition is also called a \emph{\goodvf decomposition} for the family. 
\end{definition}

\begin{remark} The class of permissible vector fields we consider and the class of \(\mathcal{LS}\)-vector fields studied in \cite{Geba} are slightly different. We point out some distinction.\begin{enumerate}
        \item The linear part $L$ of a \(\mathcal{LS}\)-vector field need not be self-adjoint. Our Fredholm requirement resembles their condition that the spectrum of $L$ is isolated from the imaginary axis in the complex plane. 
        \item The condition for a \(\mathcal{LS}\)-vector field that $H$ is a sum of finite-dimensional eigenspaces of $L$ can be compared to a condition that \(L\) admits an \exhaust sequence (Definition~\ref{def exhaust}). However, we will  only need this condition later in the paper (see Lemma~\ref{lemma cofinal}).
        \item The last condition in Definition~\ref{defgoodvf} could be omitted. When studying a flow on a fixed bounded subset, one can always use a cut-off function on this bounded subset to make $F$ satisfy this condition without changing a flow inside this subset. 
\item For a family of vector fields, our definition allows the linear part $L_\lambda$ to vary along the parameter space. 
\end{enumerate}
\end{remark}

We will now consider a family of flows on $H$ generated by a family of vector fields $F$. This is a family of flows $\eta : H \times \mathbb{R} \times \Lambda \rightarrow H$ whose trajectory is an integral curve of \(F\), i.e. the solution of the following ODE
\begin{align*} 
\frac{\partial}{\partial t} \eta(x,t,\lambda) &= -F( \eta(x,t,\lambda), \lambda) \\
\eta(x,0,\lambda) &= x
        \end{align*}
        
\begin{note} The last condition in Definition~\ref{defgoodvf} implies that $F$ is \emph{subquadratic} i.e. there exist positive constants $c_1, c_2$ such that $2 \left|\left\langle F(x) , x \right\rangle \right| \leq c_{1} \left|x \right|^2 + c_{2} $   for all $x \in H$. The subquadratic condition guarantees that $\eta$ is defined for all time $t$ (cf.~\cite{TW}). Otherwise, the flow is only a local flow. \end{note}

%Moreover, we have the following proposition which is analogous to Proposition A.18 from \cite{Rab}.
%\begin{note} The condition for $Q$ can be relaxed depending on the property of $U$ that we need. \end{note}

The argument from the proof of Proposition~2.3 from \cite{Geba} can be extended to  show the following properness result for maximal invariant sets of flows generated by \goodvf vector fields.

\begin{proposition} \label{properh} Let $X$ be a closed and bounded subset of $H$ and $\eta$ be a family of flows generated by a \goodvf family of vector fields $F = L + Q : H \times \Lambda \rightarrow H$. Then, the projection to second factor $pr_2 : \operatorname{Inv}(X \times \Lambda , \eta) \subset H \times \Lambda \rightarrow \Lambda$ is proper.
\end{proposition}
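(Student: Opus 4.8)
The plan is to follow the argument of \cite[Proposition~2.3]{Geba}. Properness of $pr_2$ is equivalent to compactness of $pr_2^{-1}(K) = \Inv{X \times \Lambda, \eta} \cap (H \times K)$ for every compact $K \subseteq \Lambda$, and since $H \times \Lambda$ is metric it suffices to establish sequential compactness. So I would take a sequence $(x_n, \lambda_n)$ in $pr_2^{-1}(K)$, pass to a subsequence with $\lambda_n \to \lambda \in K$, and consider the full trajectories $u_n(t) = \eta(x_n, t, \lambda_n)$, which lie in $X$ for all $t \in \mathbb{R}$. Because $X$ is bounded, the growth condition on $Q$ gives a uniform bound $\|Q(u_n(t), \lambda_n)\| \le M$, and the compactness of $Q$ puts every $Q(u_n(t), \lambda_n)$ into the relatively compact set $\overline{Q(X \times K)}$. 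The goal is then to extract a subsequence of $x_n = u_n(0)$ converging to a point that still lies in $\Inv{X \times \Lambda, \eta}$.

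Since $L$ is self-adjoint and Fredholm, $0$ is not in its essential spectrum, so there is a spectral gap $\delta > 0$ and an orthogonal splitting $H = H^+ \oplus H^0 \oplus H^-$ with $H^0 = \Ker{L}$ finite-dimensional, $L \ge \delta$ on $H^+$, and $L \le -\delta$ on $H^-$. Applying the spectral projections $\pi^+, \pi^0, \pi^-$ to $\dot u_n = -L u_n - Q(u_n, \lambda_n)$ and using that $u_n$ is bounded on all of $\mathbb{R}$, the homogeneous terms in Duhamel's formula vanish as the base time is sent to $\mp \infty$, leaving
\[
\pi^+ x_n = -\int_{-\infty}^{0} e^{sL}\pi^+ Q(u_n(s), \lambda_n)\, ds, \qquad \pi^- x_n = \int_{0}^{\infty} e^{sL}\pi^- Q(u_n(s), \lambda_n)\, ds,
\]
with absolute convergence because $\|e^{sL}\pi^{\pm}\| \le e^{-\delta|s|}$ on the relevant half-line; meanwhile $\pi^0 x_n$ is merely a bounded sequence in the finite-dimensional space $H^0$.

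The sequence $\{\pi^0 x_n\}$ is precompact by finite-dimensionality. For $\{\pi^+ x_n\}$: given $\varepsilon > 0$, pick $T$ with $M \delta^{-1} e^{-\delta T} < \varepsilon$, so the tail $\int_{-\infty}^{-T}$ of each integral has norm $< \varepsilon$ uniformly in $n$; the truncated piece $\int_{-T}^{0} e^{sL}\pi^+ Q(u_n(s),\lambda_n)\,ds$ has integrand valued in the set $\mathcal{C}_T := \{\, e^{sL}\pi^+ q : s \in [-T,0],\ q \in \overline{Q(X \times K)} \,\}$, which is compact as the continuous image of a compact set (using norm-continuity of $s \mapsto e^{sL}$), so that piece lies in $T\,\overline{\operatorname{conv}}(\mathcal{C}_T)$, compact by Mazur's theorem. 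Hence $\{\pi^+ x_n\}$ is totally bounded and therefore precompact, and symmetrically so is $\{\pi^- x_n\}$; passing to a further subsequence, $x_n \to x_\infty$ in $H$.

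It remains to see $(x_\infty, \lambda) \in pr_2^{-1}(K)$. The same translated argument shows $\{u_n(t)\}_n$ is precompact for each fixed $t$, and the $u_n$ are equi-Lipschitz because $\|\dot u_n\| = \|L u_n + Q(u_n,\lambda_n)\|$ is uniformly bounded, so by the Arzel\`a--Ascoli theorem a subsequence converges uniformly on compact time intervals to a continuous $u_\infty : \mathbb{R} \to X$ (here $X$ closed). Passing to the limit in $u_n(t) = x_n - \int_0^t \big(L_{\lambda_n} u_n(s) + Q(u_n(s),\lambda_n)\big)\,ds$, with the help of norm-continuity of $\lambda \mapsto L_\lambda$ and continuity of $Q$, shows $u_\infty$ solves the defining ODE at $\lambda$, hence $u_\infty = \eta(x_\infty, \cdot, \lambda)$ by uniqueness, and its image lies in $X$; thus $(x_n, \lambda_n) \to (x_\infty, \lambda) \in pr_2^{-1}(K)$. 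The crux is the precompactness extraction in the third paragraph: one has to leverage the compactness of $Q$ against the exponential decay of $e^{sL}$ on $H^{\pm}$, with the finite-dimensionality of $\Ker{L}$ absorbing exactly the neutral directions in which $e^{sL}$ provides no decay.
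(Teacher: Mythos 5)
Your proof is recognizably the same strategy as the paper's: split $H = H^+ \oplus H^0 \oplus H^-$ using the Fredholm spectral gap, write a Duhamel formula for the trajectory, and pull precompactness of the $H^\pm$-components out of the compactness of $Q$ combined with the exponential decay of $e^{sL}$ on the relevant half-line, with $\Ker{L}$ finite-dimensional handling the neutral directions. The one genuine bookkeeping difference is that you send the base time to $\mp\infty$ and extract compactness from the resulting improper integral (via a tail estimate plus Mazur's theorem on the truncation), whereas the paper keeps a fixed but large $T$ and runs a Cauchy estimate through $\|e^{TL_\lambda}(x_m - x_n)\|$; both are fine. Your closing Arzel\`a--Ascoli step is also more than the paper spends, which simply invokes closedness of $\operatorname{Inv}(X \times \Lambda, \eta)$, but again this is harmless.

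There is, however, a real gap: you write the ODE as $\dot u_n = -L u_n - Q(u_n, \lambda_n)$ with a $\lambda$-independent $L$, and then derive the identity $\pi^+ x_n = -\int_{-\infty}^0 e^{sL}\pi^+ Q(u_n(s),\lambda_n)\,ds$. But the proposition concerns a \goodvf \emph{family}, and by design the linear part $L_\lambda$ is allowed to vary with $\lambda$ (this is one of the stated departures from \cite{Geba}). The true equation is $\dot u_n = -L_{\lambda_n} u_n - Q(u_n,\lambda_n)$, so the Duhamel identity with respect to the fixed $L = L_\lambda$ does not hold as written. You need to either (a) rewrite the equation as $\dot u_n = -L_\lambda u_n - \big(Q(u_n,\lambda_n) + (L_{\lambda_n} - L_\lambda) u_n\big)$ and check that the extra linear term, while not compact, contributes to the integral a quantity of size $O(\|L_{\lambda_n} - L_\lambda\|)$ which vanishes by norm-continuity of $\lambda \mapsto L_\lambda$, or (b) work with the $n$-dependent spectral projections for $L_{\lambda_n}$ and invoke norm-convergence of those projections to those of $L_\lambda$. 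The paper's own proof accounts for this by inserting comparison terms between $e^{TL_\lambda}$ and $e^{TL_{\lambda_n}}$ in the triangle inequality. Your argument would not go through for a family with varying $L_\lambda$ until this is addressed, even though the fix is short and in the spirit of what you wrote.
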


\begin{proof} Let $\left\{(x_n,\lambda_n)\right\}$ be a sequence in $\operatorname{Inv}(X \times \Lambda)$ with $\lambda_n \rightarrow \lambda$. Let $H = H_+ \oplus H_- \oplus H_0$ be the spectral decomposition corresponding to positive, negative, kernel part of $L_\lambda$ respectively. Let $\pi_\pm , \pi_0$ be the orthogonal projection from $H$ onto $H_\pm , H_0$. 

We will show that the sequence $\left\{ x_n \right\}$ has a Cauchy subsequence by decomposing $x_n$ with respect to $H_\pm$ and  $H_0$. Since the set $\operatorname{Inv}(X \times \Lambda)$ is a closed subset of a complete space, the Cauchy subsequence will be also convergent.    

Since $L_\lambda$ is a self-adjoint Fredholm operator, there is $\delta > 0$ such that the interval $(- \delta , \delta )$ contains no spectrum of $L_\lambda$ except possibly $0$.  Then we have that
\[
        \| e^{t L_\lambda} x \| \geq e^{t \delta}  \|x\|  \ \ \mbox{for all $x \in H_+$}. 
\]

Now let $\epsilon >0$ be arbitrary. Since $X$ is bounded, we assume that $X \subset B(R)$ a ball of radius $R$. We choose $T > 0$ so that $e^{T \delta} > \frac{3R}{\epsilon}$. Using an integral equation, we can write a formula for $\eta$ as 
\begin{align}  \eta(x,t,\lambda) = e^{t L_\lambda} x + e^{t L_\lambda} \int^{t}_{0} e^{-\tau L_\lambda} Q(\eta(x,\tau,\lambda),\lambda) d\tau  .  \label{eq intequa}
\end{align}

We set $U(x,t,\lambda) := e^{t L_\lambda} \int^{t}_{0} e^{-\tau L_\lambda} Q(\eta(x,\tau,\lambda),\lambda) d\tau $. 
\begin{lemma} \label{lemma Ux_n} The sequence $U(x_n , T , \lambda_n )$ has a Cauchy subsequence (cf. \cite[Proposition~A.18]{Rab}). \end{lemma}

\begin{proof} Since $F$ is subquadratic, we have
\[ \frac{\partial}{\partial t} \left\| \eta(x,t,\lambda) \right\|^2 = 2 \left\langle F( \eta(x,t,\lambda), \lambda) , \eta(x,t,\lambda) \right\rangle \leq C_{1} \left\| \eta(x,t,\lambda) \right\|^2 + C_{2}
\]
and consequently
\[ \left\|\eta(x,t,\lambda) \right\|^2 \leq e^{C_{1} t} \left\|x  \right\|^2  + \frac{C_{2}}{C_{1}} (e^{C_{2}t} - 1)
\]
for positive time $t$. Then the set $\left\{  ( \eta(x_n,\tau,\lambda_n) , \lambda_n ) : n \in \mathbb{N} \text{ and } 0 \leq \tau \leq T \right\}$ is a bounded subset of $H \times \Lambda$, and so its image under $Q$ is precompact. Consequently the set \[\mathcal{K}\ = \left\{  T e^{-\tau L_{\lambda_n}} Q(\eta(x_n,\tau,\lambda_n),\lambda) : n \in \mathbb{N} \text{ and } 0 \leq \tau \leq T \right\}\] is also precompact.

We recall the fact that, in a Hilbert space, a convex hull of a precompact set is precompact. Then the integral \(\int^{t}_{0} e^{-\tau L_\lambda} Q(\eta(x,\tau,\lambda),\lambda) d\tau\) lies in the convex hull of \(\mathcal{K}\), which is precompact. Consequently, the sequence $U(x_n , T , \lambda_n )$ has a Cauchy subsequence as claimed. 

\end{proof}

By the above Lemma, we can pass to a subsequence such that $\left\{U(x_n,T,\lambda_n)\right\}$ is Cauchy. Since $\left\{(x_n,\lambda_n)\right\}$ is an element of the maximal invariant set, a point $\eta(x_n,T,\lambda_n)$ must lie in $X \subset B(R)$ as well. Thus, 
\begin{eqnarray*} \| e^{T L_\lambda} (x_m - x_n) \| & \leq & \| e^{T (L_\lambda - L_{\lambda_m})} x_m \| + \| e^{{T (L_\lambda - L_{\lambda_n})}} x_n \| + \| \eta(x_m,\lambda_m,T) \| \\  && + \| \eta(x_n,T,\lambda_n) \| + ||U(x_m,T,\lambda_m)- U(x_n,T,\lambda_n) \| \\
&\leq & 3R \ \ \mbox{for $m,n$ sufficiently large.}
\end{eqnarray*}

On the other hand, we have
\[ || e^{T L_\lambda} (x_m - x_n) || \geq || e^{T L_\lambda} \pi_+ (x_m - x_n) || \geq \frac{3R}{\epsilon} || \pi_+ (x_m) - \pi_+ (x_n) || .
\]
Combining this with the previous inequality, we see that the sequence $\{ \pi_+ (x_n) \}$ has a Cauchy subsequence.

Using the same argument, one can show  that the sequence $\{ \pi_- (x_n) \}$ also has a Cauchy subsequence. The sequence $\{ \pi_0(x_n) \}$ has a Cauchy subsequence because it is a bounded sequence in a finite-dimensional Euclidean space. 

Therefore the sequence $\left\{(x_n,\lambda_n)\right\}$ has a Cauchy subsequence and we finish the proof. 

%In fact, we get a stronger result : If $\lambda_n \rightarrow \lambda$ and  $(x_n,\lambda_n) \in \mbox{Inv}(X \times {\lambda_n})$ , then $\left\{(x_n,\lambda_n)\right\}$ has a convergent subsequence.
\end{proof} 

A special case for a permissible family of vector fields is a family obtained from a sequence of vector fields with an appropriate limit. We can identify and topologize $\mathbb{N}_\infty = \mathbb{N} \cup \left\{ \infty\right\}$ using a subspace $\left\{ \frac{1}{n} | n \in \mathbb{N} \right\} \cup \left\{ 0 \right\}$ of the interval $\left[ 0 , 1\right]$ with the standard topology. 
It is straightforward to check the following compactness property for a family of maps parametrized by $\mathbb{N}_\infty$.

\begin{lemma}  A map $Q : H \times \mathbb{N}_\infty \rightarrow H$ is compact if $Q(\cdot , n) : H \rightarrow H$ is compact for each $n$ and $Q(\cdot , n)$ converges to $Q(\cdot , \infty)$ pointwise uniformly on any fixed bounded subset of $ H$.
        %\item A map $Q : H \times \Lambda \times \mathbb{N}_\infty \rightarrow H$ is compact if $Q(\cdot , \cdot, n) : H \times \Lambda \rightarrow H $ is compact for each $n$ and $Q(\cdot , \cdot , n)$ converges to $Q(\cdot , \cdot , \infty)$ pointwise uniformly on any fixed bounded subset of $ H \times \Lambda$.
\label{lem Qconverge}          
\end{lemma}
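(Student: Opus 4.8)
The plan is to unwind the definition of a compact map --- continuity together with sending bounded sets to relatively compact sets --- and to verify each part using the one-point-compactification structure of $\mathbb{N}_\infty$, for which ``bounded'' in $H \times \mathbb{N}_\infty$ simply means contained in $B(R) \times \mathbb{N}_\infty$ for some $R > 0$. The key device is the auxiliary quantity $\rho(n) := \sup_{\|x\| \leq R} \| Q(x,n) - Q(x,\infty) \|$ for $n \in \mathbb{N}$, together with $\rho(\infty) := 0$: the pointwise-uniform-on-bounded-sets hypothesis says precisely that $\rho(n) \to 0$ as $n \to \infty$, i.e.\ that $\rho : \mathbb{N}_\infty \to [0,\infty)$ is continuous at $\infty$.

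First I would check continuity of $Q$ at a point $(x,n_0)$. If $n_0 \in \mathbb{N}$, then $n_0$ is isolated in $\mathbb{N}_\infty$, so any sequence $(x_k,n_k) \to (x,n_0)$ has $n_k = n_0$ eventually and continuity reduces to that of $Q(\cdot,n_0)$. If $n_0 = \infty$, then for $(x_k,n_k) \to (x,\infty)$ the points $x_k$ lie in some ball $B(R)$, and $\| Q(x_k,n_k) - Q(x,\infty) \| \leq \rho(n_k) + \| Q(x_k,\infty) - Q(x,\infty) \| \to 0$, using continuity of $Q(\cdot,\infty)$. Since $H \times \mathbb{N}_\infty$ is metrizable, sequential continuity suffices. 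A minor point worth stating explicitly: $Q(\cdot,\infty)$ is itself continuous (indeed compact), either because ``each $n$'' in the hypothesis ranges over $\mathbb{N}_\infty$, or by the standard fact that a uniform-on-bounded-sets limit of compact maps is compact.

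Next I would prove relative compactness of $Q(B(R) \times \mathbb{N}_\infty)$ by showing total boundedness: given $\epsilon > 0$, choose $N$ with $\rho(n) < \epsilon$ for all $n \geq N$; then $\bigcup_{n \geq N} Q(B(R) \times \{n\})$ lies in the $\epsilon$-neighbourhood of the relatively compact set $Q(B(R) \times \{\infty\})$ and is therefore covered by finitely many $2\epsilon$-balls, while each of the finitely many sets $Q(B(R) \times \{1\}), \dots, Q(B(R) \times \{N-1\})$ is relatively compact and hence finitely $\epsilon$-covered. So $Q(B(R) \times \mathbb{N}_\infty)$ is totally bounded, and since $H$ is complete its closure is compact. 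Equivalently, one may run a dichotomy on a sequence $(x_k,n_k)$ in $B(R) \times \mathbb{N}_\infty$: either some value of $n$ is attained infinitely often (reduce to compactness of a single slice $Q(\cdot,n)$), or $n_k \to \infty$, in which case $\| Q(x_k,n_k) - Q(x_k,\infty) \| \leq \rho(n_k) \to 0$ allows one to extract a convergent subsequence from the relatively compact sequence $Q(x_k,\infty)$.

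There is no serious obstacle here --- the lemma is essentially bookkeeping --- but the one place to take care is that a countable union of relatively compact sets need not be relatively compact; the uniform-convergence hypothesis is exactly what controls the tail $\bigcup_{n \geq N}$, so the argument must invoke it rather than merely the compactness of each individual slice $Q(\cdot,n)$.
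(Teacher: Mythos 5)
Your proof is correct. The paper states this lemma is ``straightforward to check'' and provides no proof, and your argument --- continuity at isolated points of $\mathbb{N}_\infty$ versus at $\infty$, then total boundedness of $Q(B(R)\times\mathbb{N}_\infty)$ using the uniform tail estimate $\rho(n)\to 0$ to control $\bigcup_{n\ge N}$ --- is exactly the intended routine verification, with the key point (that uniformity, not just slicewise compactness, is what handles the countable union) correctly identified.
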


\subsection{Isolating neighborhoods and Continuity} \label{sec continuity}

Here, we will reintroduce the notion of an isolating neighborhood. 
\begin{definition} For a flow $\eta$ on a Hilbert space \(H\), a closed and bounded subset $X$ of $H$ is called \emph{an isolating neighborhood} if $\Inv{X,\eta} \subset \Inter{X}$. 
\end{definition}

As a main consequence of Proposition~\ref{properh}, we can deduce an infinite-dimensional version of continuity of isolating neighborhoods for a family of flows generated by a permissible vector fields. Denote the flow $\eta(\cdot,\cdot,\lambda) : H \times \mathbb{R} \rightarrow H$ by $\eta_\lambda$. 

\begin{corollary} \label{lambdaopen} Let $\eta$ be a family of flows generated by a \goodvf family of vector fields. Then, for a fixed closed and bounded subset \(X\) of \(H\), the set \{$\lambda \in \Lambda : X$ is an isolating neighborhood for the flow $\eta_\lambda$\} is open.  
\end{corollary}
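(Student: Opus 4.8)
The plan is to argue by contradiction, using the properness statement of Proposition~\ref{properh} as the source of the compactness that is otherwise unavailable in an infinite-dimensional Hilbert space. Fix $\lambda_0$ in the set in question, so that $\Inv{X,\eta_{\lambda_0}} \subset \Inter{X}$, and suppose toward a contradiction that no neighborhood of $\lambda_0$ is contained in that set. Since $\Lambda$ is a metric space, this yields a sequence $\lambda_n \to \lambda_0$ together with points $x_n$ such that $x_n \in \Inv{X,\eta_{\lambda_n}}$ but $x_n \notin \Inter{X}$. Because $X$ is closed, each $x_n$ lies in the closed subset $X \setminus \Inter{X}$ of $H$.

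Next I would observe that, since the flow $\eta$ on $H \times \Lambda$ is stationary in the $\Lambda$-direction, a point $(x,\lambda)$ lies in $\operatorname{Inv}(X \times \Lambda, \eta)$ if and only if $x \in \Inv{X,\eta_\lambda}$; hence $(x_n,\lambda_n) \in \operatorname{Inv}(X \times \Lambda, \eta)$ for every $n$. The set $K = \{\lambda_n : n \in \mathbb{N}\} \cup \{\lambda_0\}$ is compact in $\Lambda$, so by Proposition~\ref{properh} its preimage $pr_2^{-1}(K) \subset \operatorname{Inv}(X \times \Lambda, \eta)$ is compact, hence sequentially compact. Passing to a subsequence, $(x_{n_k},\lambda_{n_k})$ converges to some $(x_*,\lambda_*) \in \operatorname{Inv}(X \times \Lambda, \eta)$; here one uses that $\operatorname{Inv}(X \times \Lambda, \eta)$ is closed in $H \times \Lambda$, which is immediate from continuity of the flow. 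Since $\lambda_{n_k} \to \lambda_0$ we get $\lambda_* = \lambda_0$, and therefore $x_* \in \Inv{X,\eta_{\lambda_0}}$.

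Finally I would extract the contradiction: each $x_{n_k}$ lies in the closed set $X \setminus \Inter{X}$, so the limit $x_*$ lies there as well, i.e. $x_* \notin \Inter{X}$; but $x_* \in \Inv{X,\eta_{\lambda_0}} \subset \Inter{X}$, which is absurd. Hence no such sequence exists, so some neighborhood of $\lambda_0$ is contained in the set, which is therefore open.

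I do not expect any serious obstacle here, since the substantive work is already contained in Proposition~\ref{properh}; the present statement is essentially a repackaging of that properness into the standard ``continuation of isolating neighborhoods'' form. The only points that warrant a line of care are: identifying $\operatorname{Inv}(X \times \Lambda, \eta)$ fiberwise with the family $\Inv{X,\eta_\lambda}$ (using the product structure of the flow), and reading ``boundary'' as $X \setminus \Inter{X}$ — which is a closed set, being the intersection of the closed set $X$ with the closed set $H \setminus \Inter{X}$ — since $X$ is not assumed to be a manifold with boundary.
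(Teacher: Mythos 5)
Your proof is correct and follows essentially the same route as the paper: argue that the complement of the set in question is closed (equivalently, by contradiction that the set is open) by taking a sequence $\lambda_n \to \lambda_0$ with witnesses $x_n \in \Inv{X,\eta_{\lambda_n}} \cap (X\setminus\Inter{X})$, invoking the properness of $pr_2$ from Proposition~\ref{properh} to extract a convergent subsequence, and then using closedness of $\operatorname{Inv}(X\times\Lambda,\eta)$ and of $X\setminus\Inter{X}$ to land in a contradiction. The paper's version is terser and phrases it directly as ``the complement is closed,'' but the content and the key ingredient (Proposition~\ref{properh}) are identical; your extra remarks about the fiberwise identification of $\operatorname{Inv}(X\times\Lambda,\eta)$ and about reading $\partial X$ as $X\setminus\Inter{X}$ are sound and merely make the reasoning more explicit.
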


\begin{proof} We will show that the compliment of this set is closed. Let $\left\{\lambda_n\right\} $ be a sequence in $\Lambda$ such that there exists $\left\{x_n\right\}$ with $x_n \in \operatorname{Inv}(X, \eta_{\lambda_n}) \cap \partial X$  and $\lambda_n \rightarrow \lambda$. By properness of the projection $pr_2$ from Proposition~\ref{properh}, there is a subsequence of $\left\{x_n\right\}$ with $x_n \rightarrow x$ for some $x \in H$. Since $\eta$ is continuous and $X$ is closed, we see that $x \in \operatorname{Inv}(X, \eta_{\lambda}) \cap \partial X$.   
\end{proof}

%Consequently, we have a direct consequence.

%\begin{corollary} When $\Lambda$ is compact, the invariant set $\operatorname{Inv}(X \times \Lambda)$ is compact.
%\end{corollary}

We will now fix a flow $\eta$ on $H$ generated by a \goodvf vector field $F$ with a \goodvf decomposition $F = L + Q$  as well as an isolating neighborhood $X$ with respect to \(\eta\). 

From Corollary~\ref{lambdaopen}, we know that $X$ is also an isolating neighborhood for flows in a neighborhood of $\eta$ whenever $\eta$ is a part of a family of permissible vector fields. To refine this result, we will introduce a (pseudo)metric between permissible decompositions so that $X$ is an isolating neighborhood for flows generated by nearby vector fields.

For compact maps $Q_1$ and $Q_2$, since $X$ is bounded, we define a pseudometric which depends on the set $X$ by
\begin{align}   \rho_{X} (Q_1 , Q_2) = \sup_{x \in X} \left\| (Q_1 - Q_2)x  \right\| . \nonumber \end{align}
We now define a pseudometric between two \goodvf decompositions $F_1 = L_1 + Q_1$ and $F_2 = L_2 + Q_2$ by
\begin{align} \label{pseudo1}   \bar{\rho}_{X} (L_1 + Q_1, L_2 + Q_2) = \left\| L_1 - L_2 \right\| + \rho_{X} (Q_1 , Q_2). \end{align}
Note that this does not quite measure distance between vector fields. For example, even when $F= L_1 + Q_1 = L_2 + Q_2 $, the quantity $\bar{\rho}_{X} (L_1 + Q_1, L_2 + Q_2)$ is nonzero in general. To modify this definition, one could take the infimum over all permissible decompositions of a vector field.  

As another consequence of Proposition~\ref{properh}, we can show that 

\begin{proposition} \label{neighborisolate} There is $\epsilon_{0} > 0$ such that $X$ is an isolating neighborhood for any flow $\eta'$ generated by $F' = L' + Q'$ with $\bar{\rho}_{X} (L' + Q', L + Q) < \epsilon_{0}$.
\label{prop hilbertpseudo}
\end{proposition}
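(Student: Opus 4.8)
The plan is to argue by contradiction and recycle the compactness estimate from the proof of Proposition~\ref{properh}. If no such $\epsilon_0$ existed, then for each $n$ I could find a permissible decomposition $F_n = L_n + Q_n$ with $\bar\rho_X(L_n + Q_n, L + Q) < 1/n$ whose generated flow $\eta_n$ does not have $X$ as an isolating neighborhood; since $\Inv{X,\eta_n}$ is closed, I may then choose $x_n \in \Inv{X,\eta_n} \cap \partial X$. From the definition of $\bar\rho_X$ one reads off $\|L_n - L\| \to 0$ and $\sup_{x \in X}\|(Q_n - Q)x\| \to 0$; in particular $\{L_n\} \cup \{L\}$ is norm-compact, and for $n$ large $L_n$ is self-adjoint Fredholm with a spectral gap about $0$ that is bounded below uniformly in $n$.

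The crucial point is that each trajectory $t \mapsto \eta_n(x_n, t)$ stays in $X$ for all time, which is exactly where $\bar\rho_X$ provides uniform control of $F_n$ relative to $F$. With this in hand the argument in the proof of Proposition~\ref{properh} applies with only cosmetic changes, reading the sequence $\{x_n\}$ for the sequence there, $L_n$ for $L_{\lambda_n}$, $L$ for $L_\lambda$, and $Q_n$ for $Q(\cdot,\lambda_n)$: the set $\bigcup_n \eta_n(x_n,[0,T])$ is bounded, so $Q\big(\bigcup_n \eta_n(x_n,[0,T])\big)$ is precompact, and since $\{Q_n(\eta_n(x_n,\tau)) : \tau \in [0,T]\}$ lies within distance $\rho_X(Q_n,Q)$ of this set, the whole family $\{Q_n(\eta_n(x_n,\tau)) : n \in \mathbb{N},\ 0 \le \tau \le T\}$ is precompact (only finitely many $n$ contribute terms farther than a prescribed $\epsilon$, and those finitely many sets are individually precompact). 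Applying the uniformly bounded operators $e^{-\tau L_n}$, passing to convex hulls, applying $e^{\tau L_n}$, and finally using the spectral gap of the limit operator $L$ on the positive and then the negative spectral projections (the finite-dimensional kernel part being trivial) forces, along a subsequence, $x_n \to x$ for some $x \in H$; and $x \in \partial X$ because $\partial X$ is closed.

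It remains to see that $x \in \Inv{X,\eta}$. Since $\sup_n\|L_n\| < \infty$ and $\|Q_n(y)\| \le \sup_{y' \in X}\|Q(y')\| + 1$ for $y \in X$ and $n$ large, the curves $\eta_n(x_n,\cdot) : \mathbb{R} \to X$ are uniformly Lipschitz; and for each fixed $t$ the set $\{\eta_n(x_n,t)\}_n$ is precompact by the same estimate (each $\eta_n(x_n,t)$ again lies in $\Inv{X,\eta_n}$). An Arzel\`a--Ascoli argument in $C([-T,T],H)$ then yields a further subsequence along which $\eta_n(x_n,\cdot) \to \gamma(\cdot)$ uniformly on compact intervals, with $\gamma(\mathbb{R}) \subset X$ and $\gamma(0) = x$. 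Passing to the limit in the integral equation \eqref{eq intequa} for $\eta_n$ --- using $\|e^{tL_n} - e^{tL}\| \to 0$, $x_n \to x$, and $Q_n(\eta_n(x_n,s)) \to Q(\gamma(s))$ uniformly on $[0,T]$ (from $\rho_X(Q_n,Q) \to 0$ together with uniform continuity of $Q$ on a compact set containing all the $\eta_n(x_n,[0,T])$ and $\gamma([0,T])$) --- shows that $\gamma$ satisfies the integral equation for $\eta$ through $x$, so $\gamma = \eta(x,\cdot)$ by uniqueness of solutions. Hence $\eta(x,\mathbb{R}) = \gamma(\mathbb{R}) \subset X$, i.e.\ $x \in \Inv{X,\eta} \cap \partial X$, contradicting the hypothesis that $X$ is an isolating neighborhood for $\eta$ (which forces $\Inv{X,\eta} \subset \Inter{X}$).

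The main obstacle is precisely that $\bar\rho_X$ is blind to the vector fields off $X$: the sequence $\{F_n\}$ need not assemble into a \goodvf family parametrized by $\mathbb{N}_\infty$ (its compact part need not converge on all bounded subsets of $H$), and the associated family of flows need not be jointly continuous, so Corollary~\ref{lambdaopen} cannot be quoted directly --- indeed the closedness of the maximal invariant set used there is no longer available. Overcoming this is the only genuine subtlety: one must run the precompactness estimates of Proposition~\ref{properh} ``by hand'', exploiting throughout that the trajectories in $\Inv{X,\eta_n}$ never leave $X$, where $F_n$ converges uniformly to $F$.
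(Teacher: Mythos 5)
Your proof is correct and follows the same strategy as the paper's: argue by contradiction and extract a Cauchy subsequence of boundary points $x_n\in\Inv{X,\eta_n}\cap\partial X$ by re-running the compactness estimate of Proposition~\ref{properh} and Lemma~\ref{lemma Ux_n} (the paper packages this via the auxiliary integrals $U_n, V_n$, you via a direct $\epsilon$-net argument for the family $\{Q_n(\eta_n(x_n,\tau))\}$, but these are cosmetically the same). Your treatment is in fact more careful at the final step: the paper simply asserts that the limit point lies in $\Inv{X,\eta}\cap\partial X$, whereas you correctly observe that Corollary~\ref{lambdaopen} cannot be quoted (the $F_n$ need not form a permissible family off $X$, where $\bar\rho_X$ gives no control) and supply the Arzel\`a--Ascoli and integral-equation limiting argument needed to close that gap.
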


\begin{proof} Suppose the statement is false. There would be a sequence of \goodvf decomposition $F_n = L_n + Q_n$ such that $\bar{\rho}_{X} (L_n + Q_n , L + Q) < \epsilon_n$ and $\epsilon_n \rightarrow 0$. There would also exist a sequence $\left\{ x_n \right\}$ such that $x_n \in \Inv{ X , \eta_n }$ and $x_n$ lies in the boundary $\partial X$ , where $\eta_n$ is generated by $F_n$. We will show that the sequence $\left\{ x_n \right\}$ is Cauchy and arrive at a contradiction because its limit lies in the intersection $ \operatorname{Inv}(X, \eta) \cap \partial X$.

Set
\begin{align}
  U_n (x,t) &= e^{t L} \int^{t}_{0} e^{-\tau L_n} Q_n (\eta_n (x,\tau)) d\tau , \nonumber \\
        V_{n} (x,t) &= e^{t L} \int^{t}_{0} e^{-\tau L} Q(\eta_n (x,\tau)) d\tau . \nonumber
\end{align}
By compactness of $Q$ and boundedness of \(X\), we can show  that a sequence $\left\{ V_{n} (x_n ,t) \right\}_n$ is Cauchy  for fixed $t$ analogous to the proof of Lemma~\ref{lemma Ux_n}. Since $\bar{\rho}_{X} (L_n + Q_n , L + Q) $ goes to $0$, we see that $\left\| U_n (x_n,t) - V_{n} (x_n,t) \right\| $ also goes to $0$, so a sequence $\left\{ U_n (x_n ,t)\right\}$ is also Cauchy. 

Using an integral equation for flows similar to (\ref{eq intequa}) and invariance of $x_n$, one can see that the quantity $\left\|  e^{t L} (x_m - x_n) \right\|$ is uniformly bounded. With an argument similar to the proof of Proposition~\ref{properh},  we can deduce that $\left\{ x_n \right\}$ is Cauchy and finish the proof. 

\end{proof}

%%%%%%%%%%%%%%%%%%%%%%%%%%%%%%%%%%%%%%%%%%%%%%%%%%%%%%%%%%%%%%%%%%%%%%%%%%%%%%%

\section{The stable Conley index}

\subsection{Compression of vector fields} \label{sec compress}

In this section, let us fix a flow $\eta$ on $H$ generated by a \goodvf vector field $F$.
We want to consider its approximated flow on a finite-dimensional subspace $V$ of $H$ in order to apply Conley index theory.

%We also adopt a notation $\left\| V - W \right\| = \left\| \pi_V - \pi_W\right\|$ using operator norm.
For now, let us also fix $F = L + Q$ a \goodvf decomposition. Denote $\pi_V$ by the orthogonal projection from $H$ onto $V$. We will consider in a vector field of the form
\begin{align} F_V = \pi_V L \pi_V + (1-\pi_V) L (1-\pi_V ) + \pi_V Q , \label{eq F_V} \end{align}
as well as a flow $\eta_V$ generated by $F_V$. This vector field $F_V$ can be considered as a \emph{compression} of $F$ on $V$. We will regard the term $\pi_V L \pi_V + (1-\pi_V) L (1-\pi_V )$ as a linear part of a decomposition of $F_V$. 

Viewing $L$ as a block matrix on $H = V \oplus V^\bot$, the linear part of $F_V$ consists of the diagonal blocks. Whereas, the difference
\begin{align}  \label{diffLV} L -  \pi_V L \pi_V - (1-\pi_V) L (1-\pi_V )  = \pi_V L (1- \pi_V ) + (1-\pi_V) L \pi_V  \end{align}
consists of the antidiagonal blocks. Note that this difference  is finite rank (and hence compact), so the linear part of $F_V$ is also Fredholm. It is then straightforward to check that $F_V$ is also a \goodvf vector field.

Our goal is to find a condition for \(V\) so that $F_V$ is sufficiently close to \(F\) either as a \goodvf family of vector fields or in the sense of Proposition \ref{neighborisolate}. In the latter sense, we use the pseudometric defined in (\ref{pseudo1}) to write down relevant  quantities for \(V\).

For the linear parts, we have an operator norm of the difference from (\ref{diffLV}) as
\begin{align} \left\| \pi_V L (1- \pi_V ) + (1-\pi_V) L \pi_V \right\| &= \left\| \pi_V L (1- \pi_V ) + (1-\pi_V) L \pi_V \right\| \notag \\
&= \left\| \pi_V L (1- \pi_V ) - (1-\pi_V) L \pi_V \right\| \notag \\
&= \left\|\pi_V L - L \pi_V \right\|, \label{eq commute}
\end{align}
where we can switch the sign in the second line because  $\pi_V L (1- \pi_V )$ and $(1-\pi_V) L \pi_V$ have orthogonal domains and codomains. As a result, this difference has the same norm as the commutator \([L,\pi_V]\) and vanishes when \(V\) is an \(L\)-invariant subspace.

For the compact parts, we have to look at the difference $ (1- \pi_{V})Q  $ on a bounded subset of \(H\). Recall the following fact:

\begin{lemma} \label{lemma cptpointwise}Suppose that $\left\{ V_n \right\}$ is a sequence of subspaces of $H$ such that $\pi_{V_n} \rightarrow 1 $ pointwise and $Q : H \rightarrow H$ is a compact map. Then $\pi_{V_n} Q$ converges to $Q$ pointwise uniformly on any bounded set. 
\end{lemma}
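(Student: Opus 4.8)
The plan is to reduce the statement to the elementary fact that a uniformly bounded sequence of operators which converges pointwise does so uniformly on precompact subsets. First I would fix a bounded subset $B \subset H$. Since $Q$ is a compact map, the image $Q(B)$ is precompact, so its closure $K := \Clos{Q(B)}$ is a compact subset of $H$. Because $Q(x) \in K$ for every $x \in B$, we have
\[
\sup_{x \in B} \left\| \pi_{V_n} Q(x) - Q(x) \right\| \leq \sup_{y \in K} \left\| \pi_{V_n} y - y \right\| ,
\]
so it suffices to prove that $\pi_{V_n} \to 1$ uniformly on the compact set $K$.

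To establish uniform convergence on $K$, I would run a standard $\epsilon/3$-argument, using the uniform operator bound $\| \pi_{V_n} - 1 \| \leq 2$ (each $\pi_{V_n}$ is an orthogonal projection, hence $\| \pi_{V_n} \| \leq 1$). Given $\epsilon > 0$, choose a finite $\tfrac{\epsilon}{6}$-net $\{ y_1 , \ldots , y_m \}$ of $K$. By the pointwise convergence hypothesis there is $N$ so that $\| \pi_{V_n} y_j - y_j \| < \tfrac{\epsilon}{3}$ for all $n \geq N$ and all $j = 1 , \ldots , m$. For an arbitrary $y \in K$, pick $y_j$ with $\| y - y_j \| < \tfrac{\epsilon}{6}$; then for $n \geq N$,
\[
\| \pi_{V_n} y - y \| \leq \| \pi_{V_n}( y - y_j ) \| + \| \pi_{V_n} y_j - y_j \| + \| y_j - y \| < \tfrac{\epsilon}{6} + \tfrac{\epsilon}{3} + \tfrac{\epsilon}{6} < \epsilon .
\]
Hence $\sup_{y \in K} \| \pi_{V_n} y - y \| \to 0$, which combined with the displayed inequality above yields $\sup_{x \in B} \| \pi_{V_n} Q(x) - Q(x) \| \to 0$, as desired.

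I do not expect a genuine obstacle here; the only point requiring any care is that the estimate be uniform simultaneously over the finite net and over the net-approximation step, which is exactly what the uniform bound $\| \pi_{V_n} \| \leq 1$ supplies. One could alternatively phrase the middle step as an equicontinuity (Ascoli-type) argument applied to the equi-Lipschitz family of maps $y \mapsto \pi_{V_n} y$ restricted to $K$, but the direct net argument is shorter and self-contained.
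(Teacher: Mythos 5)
Your proof is correct. The paper argues by contradiction: it assumes there is a sequence $x_j \in B$ with $\| (1 - \pi_{V_{n_j}}) Q(x_j) \| > \delta$, uses compactness of $Q$ to pass to a subsequence with $Q(x_j) \to y$, and then derives a contradiction from the split $\| (1-\pi_{V_{n_j}})(Q(x_j)-y)\| + \| (1-\pi_{V_{n_j}}) y\|$ together with $\|\pi_{V_n}\| \le 1$ and pointwise convergence at $y$. You prove the contrapositive-free, direct version by isolating the abstract sub-lemma (a uniformly bounded net of operators converging strongly converges uniformly on compact sets) and then running the $\epsilon/3$-net argument. The two routes are morally identical — both hinge on precompactness of $Q(B)$, the bound $\|\pi_{V_n}\| \le 1$, and strong convergence — but yours is more modular and makes the reusable fact explicit, while the paper's is shorter because it only has to defeat a single bad sequence. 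Either is fine; your direct version is perhaps slightly more robust since it never needs to extract a convergent subsequence (a step the paper states a bit loosely, writing "$Q(x_j)$ converges to some $y$" where it should say "has a convergent subsequence").
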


\begin{proof} We will prove this by contradiction. Let $B$ be a bounded subset of $H$ and suppose there exists a sequence $\left\{ x_j \right\}$ in $B$ such that $\left\| (1- \pi_{V_{n_j}} ) Q (x_j )  \right\| > \delta $ and $n_j$ goes to infinity.  

Since $\left\{ x_j \right\}$ is bounded, $Q(x_j ) $ converges to some $y$ in $H$. Then
\begin{eqnarray*} \left\| (1- \pi_{V_{n_j}} ) Q (x_j )  \right\|  & \leq &  \left\| (1- \pi_{V_{n_j}} ) (Q (x_j ) - y )  \right\|  + \left\| (1- \pi_{V_{n_j}} ) y  \right\|  \\
&\leq& \left\| Q (x_j ) - y   \right\| + \left\| (1- \pi_{V_{n_j}} ) y  \right\|  \\
&<& \delta \ \ \mbox{for sufficiently large $j$}
\end{eqnarray*}  
and we reach a contradiction.

\end{proof}
Thus, on a fixed bounded subset \(X\), we have $\rho_{X} (Q , \pi_V Q) \rightarrow 0$ when $\pi_{V_n} \rightarrow 1 $ pointwise. 
From these observations regarding linear and compact parts, we introduce a sequence of subspaces that are suitable for finite-dimensional approximation.

\begin{definition} Let $L_0$ be a bounded linear operator and $\left\{ V_n \right\}$ be a sequence of finite-dimensional subspaces of $H$. We say that the sequence $\left\{ V_n \right\}$ is an \emph{\exhaust}sequence with respect to $L_0$ if it is an increasing sequence satisfying  
\begin{enumerate}
  %\item The sequence $\left\{ V_n \right\}$ is nested, that is $\ldots \subset V_n \subset V_{n+1} \subset \ldots$.
  \item $[ L_0 ,\pi_{V_n} ] \rightarrow 0$ in operator norm,
        \item $\pi_{V_n} \rightarrow 1$ pointwise (i.e. in strong operator topology).   
\end{enumerate} 
\label{def exhaust}
\end{definition}

The asymptotically-invariant part is referred to the first condition as a subspace $V_n$ is behaving more and more like an invariant subspace. 
%\begin{note} A \exhaust sequence of subspaces can be viewed as asymptotic eigenspaces. It is clear that not every sequence of subspaces is \exhaust even if $\pi_{V_n} \rightarrow 1 $ weakly. \end{note}

One can check that if a sequence $\left\{ V_n \right\}$ is \exhaust with respect to $L$, then it is also an \exhaust with respect to \(L + K\) when $K$ is linear and compact. Hence, if we fix a \goodvf vector field, the definition of an \exhaust sequence is actually independent of the choice of \goodvf decompositions.

\begin{remark} An example of an \exhaust sequence is a sequence of increasing eigenspaces. This is the case when $L$ arises from a self-adjoint elliptic differential operator. However, it is not clear if an \exhaust sequence always exists for a general linear Fredholm operator $L$. The property  is invariant under compact perturbation.  
\end{remark} 

One of the main motivations for the definition of an \exhaust sequence is to create a permissible family of vector fields. From Lemma~\ref{lem Qconverge} and Lemma~\ref{lemma cptpointwise}, we see that: 
\begin{lemma} Let $\left\{ V_n \right\}$ be an \exhaust sequence of subspaces with respect to $L$ and let $F_n$ be a family of vector fields parametrized by $\mathbb{N}_\infty$ where 
\[ F_n = F_{V_n} = \pi_{V_n} L \pi_{V_n} + (1-\pi_{V_n}) L (1-\pi_{V_n} ) + \pi_{V_n} Q , \]
and $F_\infty = F$. Then, the family $F_{\mathbb{N}_\infty}$ is a \goodvf family of vector fields.
\end{lemma}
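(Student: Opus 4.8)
The plan is to exhibit the evident decomposition of the parametrized family $F_{\mathbb{N}_\infty}$ and check the four requirements in the definition of a \goodvf family one at a time. For $n \in \mathbb{N}$ set
\[
L_n := \pi_{V_n} L \pi_{V_n} + (1-\pi_{V_n}) L (1-\pi_{V_n}), \qquad Q_n := \pi_{V_n} Q,
\]
and put $L_\infty := L$, $Q_\infty := Q$, so that $F_n = L_n + Q_n$ for every $n \in \mathbb{N}_\infty$; this is the decomposition displayed in the statement.

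First I would verify that for each fixed $n$ the pair $(L_n , Q_n)$ is a \goodvf decomposition, which is the ``straightforward to check'' remark made just after (\ref{eq F_V}). The operator $L_n$ is bounded and self-adjoint because $L$ is; it is Fredholm because the difference $L - L_n = \pi_{V_n} L(1-\pi_{V_n}) + (1-\pi_{V_n}) L \pi_{V_n}$ from (\ref{diffLV}) is finite rank (the first summand has range in $V_n$, the second has domain $V_n$), hence a compact perturbation of the Fredholm operator $L$. The map $Q_n = \pi_{V_n} Q$ is locally Lipschitz (compose the local Lipschitz estimate for $Q$ with the bounded operator $\pi_{V_n}$) and compact (it has finite-dimensional range, or: $Q$ sends bounded sets to precompact sets and $\pi_{V_n}$ is continuous), and it inherits the subquadratic bound of $Q$ since $\|\pi_{V_n} Q(x)\| \le \|Q(x)\|$. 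The case $n=\infty$ is precisely the hypothesis that $F = L+Q$ is \goodvf. This gives requirement (i), and the same estimate $\|Q(x,n)\| = \|\pi_{V_n} Q(x)\| \le \|Q(x)\| \le c_1 \|x\| + c_2/(1+\|x\|)$, uniform in $n \in \mathbb{N}_\infty$, gives requirement (iv).

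For requirement (ii), norm-continuity of $\lambda \mapsto L_\lambda$, note that every $n \in \mathbb{N}$ is an isolated point of $\mathbb{N}_\infty$, so the only thing to check is the convergence $L_n \to L$ as $n \to \infty$. By the computation (\ref{eq commute}) we have $\|L - L_n\| = \|\pi_{V_n} L (1-\pi_{V_n}) + (1-\pi_{V_n}) L \pi_{V_n}\| = \|[L,\pi_{V_n}]\|$, which tends to $0$ by the first defining property of an \exhaust sequence (Definition~\ref{def exhaust}). For requirement (iii), compactness of $Q : H \times \mathbb{N}_\infty \to H$, I would invoke Lemma~\ref{lem Qconverge}: it reduces the claim to the compactness of $Q(\cdot, n) = \pi_{V_n} Q$ for each $n$ (already checked) together with pointwise-uniform-on-bounded-sets convergence $\pi_{V_n} Q \to Q$, which is exactly Lemma~\ref{lemma cptpointwise}, applicable because $\pi_{V_n} \to 1$ pointwise by the second defining property of an \exhaust sequence. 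This closes all four conditions.

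The argument is essentially bookkeeping once the two preparatory lemmas are in hand; the only genuinely substantive inputs are the finite-rank observation that keeps $L_n$ Fredholm and the identity (\ref{eq commute}) linking $\|L - L_n\|$ with the commutator $\|[L,\pi_{V_n}]\|$, both already recorded above. Accordingly I do not expect a real obstacle here; the one place to stay alert is that norm-continuity on $\mathbb{N}_\infty$ only tests behaviour at the point $\infty$, so it must not be mistaken for a uniformity statement over all of $\mathbb{N}$.
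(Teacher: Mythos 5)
Your proof is correct and follows exactly the route the paper signals: the paper derives the lemma directly from Lemma~\ref{lem Qconverge} and Lemma~\ref{lemma cptpointwise}, and your verification of the four conditions in the definition of a permissible family—using the identity (\ref{eq commute}) for the linear part and the two cited lemmas for the compact part—is precisely that argument spelled out.
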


% Consequently, we can show that $X$ is an isolating neighborhood for the flow $\phi_{V_n}$ for sufficiently large $n$. In fact, we have a slightly stronger result.
% 
% \begin{proposition} Let $\left\{ V_n \right\}$ be an \exhaust sequence. Then there are $\epsilon , N > 0$ such that $X$ is an isolating neighborhood for the flow $\phi_W$ for any subspace $W$ such that $\left\| W - V_n \right\| < \epsilon $ and $n \geq N$ .
% \end{proposition}
% 
% \begin{proof} Suppose the contrary, then we can find a sequence $\left\{ \epsilon_m \right\}$ of positive real converging to $0$, a sequence $\left\{ n_m \right\}$ going to infinity, and a sequence of subspaces $\left\{ W_{m} \right\}$ with $\left\| W_m - V_{n_m} \right\| < \epsilon_m$ such that $X$ is not an isolating neighborhood for $\phi_{W_m}$.
% 
% It is not hard to see that $\left\{ W_{m} \right\}$ is also an \exhaust sequence. Then $F_{W_m}$ is a \goodvf family of vector fields and we reach a contradiction from continuity of the Conley index. 
%   
% \end{proof}
% 
Now let $X$ be a fixed isolating neighborhood with respect to the flow $\eta$. The above Lemma and the continuity result for isolating neighborhoods (Corollary~\ref{lambdaopen}) implies that \(X\) is an isolating neighborhood for \(\eta_{V_n}\) when $\left\{ V_n \right\}$ is an \exhaust sequence of subspaces with respect to $L$ and \(n\) is sufficiently large. These subspaces \(V_n\) provide ground for finite dimensional approximation. 

Alternatively, we can switch our viewpoint to the context of Proposition~\ref{neighborisolate}. As we fixed \(F=L+Q\) and \(X\), there is \(\epsilon_0\) such that $X$ is an isolating neighborhood for a flow generated by a vector field $F' = L' + Q'$ such that  $\bar{\rho}_{X} (L' + Q', L + Q) < \epsilon_{0}$. From (\ref{eq commute}), when we plug in $F_V$ for $F'$, the previous inequality becomes 
\begin{align}\label{eq pseudov1} \|\pi_V L - L \pi_V \| +\sup_{x \in X} \left\| (1 - \pi_V)Qx  \right\| < \epsilon_0 . \end{align}
We will see that a subspace $V$ satisfying the above inequality is also suitable for finite dimensional approximation.

% Note that such a subspace always exists as we can take a sufficiently large \(L\)-invariant subspace.  

When $X$ is an isolating neighborhood for the flow $\eta_V$, we can see that a compact subset $X \cap V \subset V$ is an isolating neighborhood with respect to the flow $\eta_V$ restricted to $V$. Since $V$ is finite-dimensional, we can now apply Conley theory to obtain the Conley index of \(X \cap V\) denoted by
\[ \cindex{X \cap V , \eta_V }. \] 
This gives us a collection of Conley indices from each finite-dimensional subspace \(V\) whenever \(X\) is an isolating neighborhood for \(F_V\). Note that the vector field \(F_V\) restricted to \(V\) is equal to $\pi_V F$. Consequently, the flow $\eta_V$ restricted to $V$ does not depend on a choice of decompositions of $F$. 
%These indices are related by suspension and will form a well-defined object in stable homotopy.

We will now establish a relationship between Conley indices coming from compressed flows on different subspaces. 
Let $V, W$ be finite-dimensional subspaces of $H$ with orthogonal decomposition $W = V \oplus U$. Roughly speaking, when \(W\) and $V$ are sufficiently large, a flow \(\eta_W\) on \(W\) can be related to a product of a flow \(\eta_V\) on \(V\) and a linear flow generated by \(L\) on \(U\).

To start with, we consider a flow $\eta_{V,W}$ generated by a vector field $F_{V,W}$ of the form
\begin{align}
F_{V,W} =  \pi_V L \pi_V + \pi_U L \pi_U + (1-\pi_W) L (1-\pi_W ) + \pi_V Q , \label{eq F_V,W} \end{align}
where we will regard \(\pi_V L \pi_V + \pi_U L \pi_U + (1-\pi_W) L (1-\pi_W )\) as a linear part of \(F_{V,W}\).

\begin{lemma} \label{lemma F_V,W} Let \(W = V \oplus U\) be a subspace of \(H\). There is an inequality \[ \|L -  \pi_V L \pi_V - \pi_U L \pi_U -(1-\pi_W) L (1-\pi_W )\| \leq \left\| L \pi_W - \pi_W L \right\| + \left\| L \pi_V - \pi_V L \right\|.  \]
\end{lemma}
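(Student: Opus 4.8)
The plan is to reduce the claimed estimate to the single-projection identity already exploited in (\ref{diffLV})--(\ref{eq commute}), applied once to $\pi_W$ and once, in a relative form, to $\pi_V$. The starting point is the elementary observation that, since $W = V \oplus U$ orthogonally, we have $\pi_W = \pi_V + \pi_U$, and hence
\[ \pi_W L \pi_W = \pi_V L \pi_V + \pi_U L \pi_U + \pi_V L \pi_U + \pi_U L \pi_V . \]
Therefore the operator whose norm we must bound can be written as the sum of $L - \pi_W L \pi_W - (1-\pi_W) L (1-\pi_W)$ and $\pi_V L \pi_U + \pi_U L \pi_V$, and I would estimate these two summands separately and finish with the triangle inequality.

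For the first summand, the computation in (\ref{diffLV}) with $\pi_V$ replaced by $\pi_W$ gives
\[ L - \pi_W L \pi_W - (1-\pi_W) L (1-\pi_W) = \pi_W L (1-\pi_W) + (1-\pi_W) L \pi_W , \]
and exactly the reasoning of (\ref{eq commute}) --- the two summands have mutually orthogonal domains and mutually orthogonal codomains --- shows that its norm equals $\| \pi_W L - L \pi_W \|$.

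For the second summand I would pass to the commutator $[L,\pi_V] = L\pi_V - \pi_V L$. Using $\pi_U \pi_V = \pi_V \pi_U = 0$ one checks $\pi_U L \pi_V = \pi_U (L\pi_V - \pi_V L)$ and $\pi_V L \pi_U = -(L\pi_V - \pi_V L)\pi_U$, so both $\| \pi_U L \pi_V \|$ and $\| \pi_V L \pi_U \|$ are at most $\| L\pi_V - \pi_V L \|$. Since $\pi_V L \pi_U$ sends $U$ into $V$ while $\pi_U L \pi_V$ sends $V$ into $U$, the two have orthogonal domains and orthogonal ranges, so the norm of their sum is the maximum of their norms, hence again at most $\| L\pi_V - \pi_V L \|$. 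Combining the two bounds yields the asserted inequality.

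The one step that needs genuine care is this last orthogonality argument: a crude triangle inequality applied to $\pi_V L \pi_U + \pi_U L \pi_V$ would cost a factor of $2$ and destroy the stated constant, so one really must use that the operator norm of an off-diagonal two-by-two block operator is the maximum of the norms of its blocks. Everything else is bookkeeping.
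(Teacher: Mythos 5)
Your proof is correct and rests on exactly the same algebraic decomposition the paper uses: both of you split the operator into the $W$-off-diagonal piece $(1-\pi_W)L\pi_W + \pi_W L(1-\pi_W)$ plus the $V$--$U$ cross term $\pi_U L\pi_V + \pi_V L\pi_U$, and both of you estimate the first piece by $\|L\pi_W - \pi_W L\|$ via the block-matrix argument of (\ref{eq commute}). Where you diverge is the cross term. The paper asserts $\|\pi_U L\pi_V + \pi_V L\pi_U\| = \sqrt{2}\|\pi_U L\pi_V\|$ and likewise $\sqrt{2}\|(1-\pi_V)L\pi_V\| = \|L\pi_V - \pi_V L\|$, and relies on the two $\sqrt{2}$ factors cancelling. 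Both $\sqrt{2}$ identities are actually false --- the sum of two adjoint off-diagonal blocks has norm equal to the common block norm, not $\sqrt{2}$ times it (take $V = \langle e_1\rangle$, $U = \langle e_2\rangle$, $L$ the swap on $\mathbb{R}^2$: the cross term is the swap itself, of norm $1$, while $\sqrt{2}\|\pi_U L\pi_V\| = \sqrt{2}$) --- but because they overcount by the same factor and then cancel, the paper's final inequality survives. Your argument avoids this entirely: you invoke the correct max-of-blocks identity for an off-diagonal $2\times 2$ block operator, and then bound each block by $\|[L,\pi_V]\|$ via the commutator factorizations $\pi_U L\pi_V = \pi_U[L,\pi_V]$ and $\pi_V L\pi_U = -[L,\pi_V]\pi_U$ (which use $U\perp V$, playing the role of the paper's appeal to $U\subset V^{\perp}$). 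So you reach the same bound by a cleaner route, and your closing remark is apt: the crude triangle inequality would indeed lose a factor of $2$, which is precisely why the max-of-blocks fact is needed.
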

\begin{proof}
Since $\pi_W = \pi_V + \pi_U $, we have an identity \[ \pi_W L \pi_W =  \pi_V L \pi_V + \pi_U L \pi_U + \pi_U L \pi_V + \pi_V L \pi_U. \]
Consider the term \(\pi_U L \pi_V + \pi_V L \pi_U\), we see that \(\pi_U L \pi_V \) and \( \pi_V L \pi_U\) are adjoint of each other. In particular, their norms are equal and \(\left\| \pi_U L \pi_V + \pi_V L \pi_U \right\| = \sqrt{2} \left\| \pi_U L \pi_V \right\| \) because \(U\) and \(V\) are orthogonal. We can also apply this when \(U = V^\bot \) so that
\begin{align} \sqrt{2} \left\| (1-\pi_V) L \pi_V \right\| &= \left\| (1-\pi_V) L \pi_V  + \pi_V L (1-\pi_V ) \right\| \notag \\
&= \left\| (1-\pi_V) L \pi_V  - \pi_V L (1-\pi_V ) \right\| \notag \\   
&= \left\| L \pi_V - \pi_V L \right\| . \label{eq sqrt2LV}
\end{align}

Then we have  
\begin{align*}
        & \left\| L - \pi_V L \pi_V - \pi_U L \pi_U - (1-\pi_W) L (1-\pi_W ) \right\| \\
        =& \left\| L \pi_W + \pi_W L - \pi_W L \pi_W   - \pi_V L \pi_V - \pi_U L \pi_U \right\|  \\
        \leq& \left\| L \pi_W + \pi_W L - 2\pi_W L \pi_W \|+ \|  \pi_W L \pi_W   - \pi_V L \pi_V - \pi_U L \pi_U \right\|  \\
        = & \left\| (1-\pi_W) L \pi_W  + \pi_W L (1-\pi_W ) \right\| + \left\| \pi_U L \pi_V + \pi_V L \pi_U \right\| \\
        = & \left\| L \pi_W - \pi_W L \right\| + \sqrt{2} \left\| \pi_U L \pi_V \right\| \\
        \leq & \left\| L \pi_W - \pi_W L \right\| + \sqrt{2} \left\| (1-\pi_V) L \pi_V \right\| \quad(\text{since } U \subset V^{\bot})   \\
        = & \left\| L \pi_W - \pi_W L \right\| + \left\| L \pi_V - \pi_V L \right\|.
\end{align*}
\end{proof}
Consequently, we see that the vector field \(F_{V,W}\) is sufficiently close to \(F\) in the pseudometric \(\bar{\rho}_{X}\) whenever \(F_V\) and \(F_W\) are. In particular, a ball with respect to the pseudometric \(\bar{\rho}_{X}\) is convex so that one can make a convex combination \((1-s) F_{V,W} + s F_W \) sufficiently close to \(F\). 

The difference of \(L\) and a convex combination of  linear parts of \(F_{V,W}\) and \(F_W\) is always finite rank, and hence compact. 
Note that, in general, a segment joining two Fredholm maps might not lie in the space of Fredholm maps. However, we  can still choose the two linear parts to lie in a small ball of $L$ contained in the space of Fredholm maps.

From the above discussion, we choose \(V \) and $W$ so that \((1-s) F_{V,W} + s F_W \) is a \goodvf family of vector fields parametrized by $s \in I = [0,1]$. From the previous lemma, if we require that $\left\| L \pi_W - \pi_W L \right\| + \left\| L \pi_V - \pi_V L \right\| < \epsilon_0$, we can ensure that $X$ is an isolating neighborhood for the family  \((1-s) F_{V,W} + s F_W \). With an additional hypothesis, we have the following result, which is a slight generalization of arguments in \cite{Geba} and \cite{Man1}.

% Denote $U_0$ by the kernel of the self-adjoint operator $\pi_U L \pi_U$ on $U$. We have a decomposition $U = U_0 \oplus U_1$ and also denote $e^{\pi_{U} L}$ by the flow generated by the linear vector field $\pi_{U} L$.

\begin{proposition} Let $W = V \oplus U$ be a finite-dimensional subspace. 
 Suppose that $X \times I $ is an isolating neighborhood for the family of flows generated by \((1-s) F_{V,W} + s F_W \) parametrized by $s \in I$ and the bilinear form \(\pi_U L_{|U}\) is nondegenerate. Then
\begin{align} \cindex{X \cap W , \eta_W} \cong \cindex{ X \cap V , \eta_V} \wedge S^{U^-},  \label{eq indexsusp} \end{align} 
where \(U^-\) is the negative eigenspace of \(\pi_U L_{|U}\). In fact, we can replace \(U^-\) by any maximal negative definite subspace with respect to \(\pi_U L_{|U}\).  
\label{prop suspindex}
\end{proposition}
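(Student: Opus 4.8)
The plan is to connect the two Conley indices by a chain of continuations that ends at a product flow on $W=V\oplus U$, and then to invoke the product formula for the Conley index together with the linear flow computation of Example~\ref{ex linearflow}. Every flow that arises lives on the finite-dimensional, hence locally compact, space $W$ — note that $W$ is invariant under $F_{V,W}$, under $F_W$, and under each convex combination $(1-s)F_{V,W}+sF_W$ — so classical Conley theory, in particular the continuation property, applies directly; the only infinite-dimensional input is the standing hypothesis that $X\times I$ is an isolating neighborhood for the family on $H$. First I would restrict the given family to $W$: since $W$ is invariant under every member of it, the invariant set in $W$ of each restricted flow is its invariant set in $H$ intersected with $W$, so the hypothesis forces $X\cap W$ to be an isolating neighborhood for each $s\in I$, and the continuation property gives $\cindex{X\cap W,\eta_W}\cong\cindex{X\cap W,\eta_{V,W}}$, where on the right $\eta_{V,W}$ is understood as restricted to $W$.

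Next I would decouple the compact part. Writing a point of $W$ as $(x_V,x_U)$, a direct computation gives
\[ F_{V,W}|_W(x_V,x_U)=\bigl(\pi_V Lx_V+\pi_V Q(x_V+x_U),\ \pi_U L\,x_U\bigr), \]
so the $U$-coordinate already evolves by the linear flow $\psi_U$ of $\pi_U L_{|U}$, independently of $x_V$. I would then continue along the family $G_s(x_V,x_U)=\bigl(\pi_V Lx_V+\pi_V Q\bigl(x_V+(1-s)x_U\bigr),\ \pi_U L\,x_U\bigr)$ for $s\in[0,1]$, so that $G_0=F_{V,W}|_W$ while $G_1$ generates the product flow $\eta_V|_V\times\psi_U$ (recall that $\eta_V$ restricted to $V$ is generated by $\pi_V F$). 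Because $\pi_U L_{|U}$ is self-adjoint and nondegenerate, the only bounded orbit of $\psi_U$ is the origin; hence for every $s$ an orbit of $G_s$ that stays in the bounded set $X\cap W$ must have $x_U\equiv 0$, and on $V\times\{0\}$ we have $G_s(x_V,0)=(\pi_V F(x_V),0)$ independently of $s$. Thus $\operatorname{Inv}(X\cap W,G_s)=S_V\times\{0\}$ for all $s$, where $S_V:=\operatorname{Inv}(X\cap V,\eta_V|_V)$, and by the $s=0$ case this set lies in the interior of $X\cap W$; so $X\cap W$ is an isolating neighborhood for each $G_s$ and continuation yields $\cindex{X\cap W,\eta_{V,W}}\cong\cindex{X\cap W,G_1}$.

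Finally I would apply the product formula. Since the Conley index depends only on the isolated invariant set, I may recompute $\cindex{X\cap W,G_1}$ using the product isolating neighborhood $N_V\times B$, where $N_V$ is any isolating neighborhood of $S_V$ in $V$ and $B$ is any closed ball about the origin in $U$; the product formula from Section~\ref{sec conley} then gives $\cindex{S_V\times\{0\},\eta_V|_V\times\psi_U}\cong\cindex{S_V,\eta_V|_V}\wedge\cindex{\{0\},\psi_U}$, the first factor being $\cindex{X\cap V,\eta_V}$ by definition and the second being $S^{U^-}$ by Example~\ref{ex linearflow}, with $U^-$ the negative eigenspace of $\pi_U L_{|U}$. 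Chaining the three isomorphisms gives \eqref{eq indexsusp}. For the last assertion, $U^-$ is itself a maximal negative definite subspace for the form $\pi_U L_{|U}$, and any two such subspaces have the same dimension (the negative index of the form); since the Conley index is a homotopy type and $S^{U'}\simeq S^{U^-}$ whenever $\dim U'=\dim U^-$, one may replace $S^{U^-}$ by $S^{U'}$ for an arbitrary maximal negative definite $U'$.

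The step I expect to be the main obstacle is the bookkeeping in the decoupling continuation: one must verify that the invariant set stays exactly equal to $S_V\times\{0\}$ along the whole family $G_s$ — this is where nondegeneracy of $\pi_U L_{|U}$ is indispensable, being what forces the $U$-component of every bounded orbit to vanish — and that it remains inside the interior of $X\cap W$, so that $X\cap W$ is a legitimate isolating neighborhood throughout. The replacement of $X\cap W$ by a product isolating neighborhood in the last step is harmless but should be stated explicitly, since $X\cap W$ itself need not be of product form.
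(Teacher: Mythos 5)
Your proof is correct and takes essentially the same approach as the paper: a chain of two continuations on $W$ (from $\eta_W$ to $\eta_{V,W}$, then to the decoupled product flow), followed by the product formula and the linear-flow computation of Example~\ref{ex linearflow}. The only cosmetic difference is that you perform the second continuation on $X\cap W$ and only pass to a product isolating neighborhood at the end, whereas the paper first switches to $N=(X\cap V)\times B(R,U)$ and then continues; both orderings are valid since the Conley index depends only on the isolated invariant set.
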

\begin{proof} 
We see that the vector field \((1-s) F_{V,W} + s F_W \) is $W$-invariant for each $s$, so we can restrict to a family of flows on $W \times I$ with $(X \cap W) \times I$ becoming a compact isolating neighborhood for this family. By continuation of the Conley index, we have 
\[ \cindex{X \cap W , \eta_W} \cong \cindex{X \cap W , \eta_{V,W}}. \] 

Next, we look at the flow $\eta_{V,W}$ on $W$. The vector field $F_{V,W}$ restricted to $W$ becomes $\pi_V L \pi_V + \pi_U L \pi_U + \pi_V Q$, which is
\[ \begin{bmatrix}\pi_V F & \pi_V Q \\
0 & \pi_UL \\
\end{bmatrix} \]
as a block matrix on $W = V \oplus U$. We see that the $U$-component of the flow is given by a nondegenerate linear flow. In particular, an element with nonzero \(U\)-component cannot stay in a bounded set for all time. Consequently, an element of the maximal invariant subset of \(X \cap W\) with respect to \(\eta_{V,W}\) must lie in \(V \), in particular $\Inv{X \cap W , \eta_{V,W}} = \Inv{X \cap V  , \eta_{V,W}} $. 

Denote by $B(U , R)$  the ball of radius $R$ in $U$ centered at the origin. By similar argument, we can check that the set $N = (X \cap V ) \times B(R,U)$  is an isolating neighborhood for $\eta_{V,W}$ with the same maximal invariant subset $\Inv{N, \eta_{V,W}} = \Inv{X \cap V  , \eta_{V,W}}=\Inv{X \cap W , \eta_{V,W}} $. Thus, their Conley indices are the same, i.e. 
\[ \cindex{X \cap W , \eta_{V,W}}  = \cindex{N, \eta_{V,W}}. \]

%For a fixed $w \in W$, we can consider a flow $\psi_w$ on $V_k$ generated by the vector field $l v + \pi_k c (v + w)$ for $v \in V_k$. This gives a family of flow on $V_k$ parametrized by $w \in W$ with $\psi_0 = \phi_k$. Since $N \cap V_k$ is an isolating neighborhood at $w=0$, it is also an isolating neighborhood for $w$ lying in a small ball in.  

The final step is to compare the Conley index of \(N\) to that of a product flow. Consider a family of flows $\widehat{\eta}_s$ on $W$ parametrized by \(s \in\mathbb{R}\) which is generated by a vector field of the form  
\[ \widehat{F}_s (w) =  \pi_V L \pi_V (x) + \pi_U L (y) + \pi_V Q (x + sy) , \] 
where we use a decomposition $w = x+y$ with $x \in V $ and $y \in U$. We see that \(\widehat{F}_1 \) is equal to \(F_{V,W} \) on \(W\) and \(\widehat{F}_0 \) is given by
\[ \begin{bmatrix}\pi_V F  & 0 \\
0  & \pi_{U} L  \\
\end{bmatrix} \]
 as a block matrix on $W = V \oplus  U$. Similarly, we can conclude that the maximal invariant subset $\Inv{N,\widehat{\eta}_s}$ lies in $V$ for all $s$. We notice that the vector field $\widehat{F}_s$ on $V$ is equal to $\pi_V F$, so the maximal invariant set of \(N\) is the same for each \(s \in \mathbb{R} \). In other words, $N$ is an isolating neighborhood for the each of the flow $\widehat{\eta}_s$ with the same maximal invariant subset. By continuation of the Conley index, we have that 
\[  \cindex{N , \widehat{\eta}_1} \cong \cindex{N , \widehat{\eta}_0}.
\]

% We still need to simplify the flow $\widehat{\phi}_0$ on $V \oplus U_0$ which is generated by generated by $\pi_V L \pi_V + \pi_V Q$. Since the vector field takes value only in \(V\),  one can regard this flow as a family of flow on $V$ parametrized by $U_0$. Since $X$ is bounded (in particular in \(U_{0}\)-component), we see have that $X \cap (V \times U_0 ) = X \cap (V \times B(U_0 , R_0) )$ for some $R_0$. 

From the above block matrix, we see that the flow \(\widehat{\eta}_0\) is the product of the flow \(\eta_V\) on \(V\) and a linear flow on \(U\). Hence,
\[ \cindex{N , \widehat{\eta}_0} \cong \cindex{ X \cap V  , \eta_V} \wedge \cindex{B(R,U) , e^{\pi_{U} L }}.
\] 
From Example~\ref{ex linearflow}, we have that that the index \(\cindex{B(R,U) , e^{\pi_{U} L }}\) has a homotopy type of \(S^{U^-}\) where \(U^-\) is the negative eigenspace of \(\pi_U L_{|U}\).  
%It is not hard to see that $\cindex{B(R,U_1) , e^{\pi_{U} L }} \cong S^{\dim (U \cap V^- )}$ where $V^-$ is the negative eigenspace of $L$.

Putting everything together, we have the desired result.

\end{proof}

\subsection{Construction of the stable Conley Index} \label{sec stable}
In this context, one natural choice of stable homotopy categories for developing the stable Conley index is a notion of coordinate-free spectra. A background in stable homotopy theory can be found in \cite{May2}. Naively, a (pre)spectrum is a sequence of spaces related by some maps whereas a coordinate-free spectrum is a collection of spaces indexed by finite-dimensional subspaces of a fixed infinite-dimensional vector space, called a \emph{universe}. In this section, we use our fixed Hilbert space $H$ for a universe.
%In our setting, we can just take \(H\) for the universe.

\begin{definition} A coordinate-free prespectrum \(E\) is a collection of pointed spaces \(E_V\) for each finite-dimensional subspace \(V\) of \(H\) together with structure maps
\[ \Sigma^U E_V \rightarrow E_W \]
whenever \(W = V \oplus U \).
\end{definition}  

\begin{note} A prespectrum is a spectrum when the adjoint map \(E_V \rightarrow \Omega^U E_W \) is a homeomorphism. For simplicity, we will assume we are working on spectra as one can always apply the spectrification functor to turn a prespectrum into a spectrum.
\end{note}

We will first fix an isolating neighborhood \(X\) of a flow $\eta$ on $H$ generated by a \goodvf vector field $F = L + Q$ as in the previous subsection. 
Let us now consider a relation between Conley indices from (\ref{eq indexsusp})
 \begin{align} \cindex{X \cap W , \eta_W} \cong \cindex{ X \cap V , \eta_V} \wedge S^{U^-} .  \nonumber
% \label{eq suspend} 
 \end{align}
Observe that this relation has the term \(S^{U^-}\) rather than \(S^U\) in the above definition of spectra.

To obtain a prespectrum from Conley indices, we could try to assign \(\Sigma^{V^+} \cindex{ X \cap V , \eta_V} \) to a subspace \(V\) so that we have
\[ \Sigma^{U}\left(\Sigma^{V^+} \cindex{ X \cap V , \eta_V}  \right) \cong \Sigma^{W^+}\left( \Sigma^{U^-}\cindex{ X \cap V , \eta_V}\right) \cong \Sigma^{W^+}\cindex{X \cap W , \eta_W}. \]
% Hence, we have a spectrum \(E(X)\) associated to \(X\) given by
% \begin{align}
% E(X)(V) = \Sigma^{V^+} \cindex{ X \cap V , \phi_V} \label{eq conhilbertspec}
% \end{align}
% Note that the positive and negative space depends on a choice of a  quadratic form on \(H\), which usually arises from a self-adjoint operator. This choice only changes its associated spectrum by a suspension. Hence, we can view it as a choice of grading for a spectrum associated to \(X\).  
However, the Conley index \(\cindex{ X \cap V , \eta_V}\) is not defined for every finite-dimensional subspace \(V\) of \(H\).  Another issue is that a Conley index is not a space, but actually a collection of spaces with specified homotopy equivalences between them.  

Alternatively, we will construct a spectrum for each finite-dimensional subspace of $H$ satisfying certain conditions and will show the resulting spectra are all equivalent. An important requirement for such a subspace is ensure that the hypothesis for Proposition~\ref{prop suspindex} holds. The next proposition gives quantitative criteria for such suitable subspaces.

\begin{proposition} There exist sufficiently small positive real numbers \(c_{1}, c_2\) so that if a finite-dimensional subspace \(V\) satisfies
\begin{enumerate}
\item \(V\) contains \(\ker{L}\), 
\item $ \|\pi_V L - L \pi_V \| \leq c_1  $,
\item \(\sup_{x \in X} \left\| (1 - \pi_V)Qx  \right\| \leq c_2\).

\end{enumerate}
Then, the Conley index $\cindex{ X \cap V , \eta_V}$ is defined. In addition, if \(V\) and \(W\) are two such spaces and \(W = V \oplus U\), then  \(\pi_U L_{|U}\) has no kernel and \(X\) is an isolating neighborhood for the family of flows generated by \(s F_{V,W} + (1-s)F_W\).
\label{prop indep}
\end{proposition}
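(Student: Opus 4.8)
The plan is to choose $c_1$ and $c_2$ small enough that several things happen simultaneously, using the already-established metric inequalities. First I would invoke Proposition~\ref{neighborisolate} with the fixed $F = L+Q$ and $X$ to produce the constant $\epsilon_0 > 0$. Then I set $c_1, c_2$ to be (say) both less than $\epsilon_0/4$, subject to one further smallness condition described below. Given such a $V$, observe via~(\ref{eq commute}) that $\bar\rho_X(F_V, F) = \|\pi_V L - L\pi_V\| + \sup_{x\in X}\|(1-\pi_V)Qx\| \le c_1 + c_2 < \epsilon_0$, so Proposition~\ref{neighborisolate} immediately gives that $X$ is an isolating neighborhood for $\eta_V$; restricting to the finite-dimensional space $V$, the compact set $X\cap V$ is an isolating neighborhood for $\eta_V|_V$, and since $V$ is finite-dimensional the classical Conley index $\cindex{X\cap V, \eta_V}$ is defined. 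That disposes of the first assertion.

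For the second assertion, let $V, W$ be two such subspaces with $W = V\oplus U$. The key point is that $U \perp \ker L$: since both $V$ and $W$ contain $\ker L$ and $U = W \cap V^\perp$, any $x\in U\cap\ker L$ lies in $V\cap V^\perp = \{0\}$. Next I would show $\pi_U L_{|U}$ has no kernel by a quantitative argument. Because $L$ is self-adjoint Fredholm, there is $\delta>0$ with no spectrum of $L$ in $(-\delta,\delta)\setminus\{0\}$; decompose $H = H_0 \oplus H_0^\perp$ (kernel and its complement), so $\|Lx\| \ge \delta\|x\|$ for $x\in H_0^\perp$. For $u\in U$ we have $\|\pi_U L u - Lu\| \le \|(1-\pi_V)L\pi_V\| \cdot(\text{bounded factors})$; more precisely, since $U\subset W$ and $U\perp V$, one estimates $\|(1-\pi_U)Lu\|$ in terms of $\|[\pi_V,L]\|$ and $\|[\pi_W,L]\|$, which are both $\le c_1$ by hypothesis (using~(\ref{eq sqrt2LV}) and Lemma~\ref{lemma F_V,W}). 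Since $u\in U\subset \ker L^\perp$ gives $\|Lu\|\ge\delta\|u\|$, we get $\langle \pi_U L u, u\rangle = \langle Lu, u\rangle \ge$ (nothing directly) — instead I'd argue $\|\pi_U L u\| \ge \|Lu\| - \|(1-\pi_U)Lu\| \ge \delta\|u\| - (\text{const})\,c_1\|u\|$, which is positive once $c_1$ is small relative to $\delta$. Hence $\pi_U L_{|U}$ is injective, i.e. nondegenerate.

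Finally, for the family $s F_{V,W} + (1-s)F_W$, I would apply Lemma~\ref{lemma F_V,W} to bound $\bar\rho_X$ between $F_{V,W}$ and $F$ by $\|L\pi_W - \pi_W L\| + \|L\pi_V - \pi_V L\| + \sup_{x\in X}\|(1-\pi_V)Qx\| \le 2c_1 + c_2$, and similarly $\bar\rho_X(F_W, F) \le c_1 + c_2$ (the compact parts of $F_{V,W}$ and $F_W$ are both $\pi_V Q$ and $\pi_W Q$ respectively, wait — $F_W$ has compact part $\pi_W Q$, so one needs $\sup_X\|(1-\pi_W)Qx\|\le c_2$ too, which holds since $W$ is one of the admissible subspaces). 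Since the pseudometric ball of radius $\epsilon_0$ around $F$ is convex, every convex combination $sF_{V,W} + (1-s)F_W$ stays within that ball provided $2c_1 + c_2 < \epsilon_0$, which is our standing assumption; moreover all these vector fields are $W$-invariant with linear part a finite-rank perturbation of $L$ lying in a small (hence Fredholm) ball around $L$, so the family is permissible. Proposition~\ref{neighborisolate} applied uniformly in $s$ then shows $X$ (equivalently $X\times I$, after restricting to $W\times I$) is an isolating neighborhood for this family.

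**Main obstacle.** The routine parts are the metric bookkeeping, which is already packaged in~(\ref{eq commute}), (\ref{eq sqrt2LV}), and Lemma~\ref{lemma F_V,W}. The one genuinely delicate point is the nondegeneracy of $\pi_U L_{|U}$: I must convert the hypothesis ``$V$ contains $\ker L$'' together with the commutator bound into a uniform lower bound $\|\pi_U L u\| \ge c\|u\|$ on $U$. This requires care because $U$ is only \emph{approximately} $L$-invariant, so $\pi_U L u$ differs from $Lu$, and one must control that difference by $c_1$ while using the spectral gap $\delta$ of $L$ away from zero on $\ker L^\perp$ — the choice of $c_1$ must ultimately be made relative to $\delta$, not just relative to $\epsilon_0$.
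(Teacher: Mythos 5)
Your proof is correct and follows essentially the same route as the paper: invoke Proposition~\ref{neighborisolate} to get $\epsilon_0$, use the estimate of Lemma~\ref{lemma F_V,W} and convexity of the pseudometric ball to control $F_V$, $F_W$, $F_{V,W}$ and their convex combinations, and establish nondegeneracy of $\pi_U L_{|U}$ by combining the spectral gap $\delta_0$ of $L$ on $(\ker L)^\perp$ (where $U \subset V^\perp \subset (\ker L)^\perp$) with the commutator bounds $\|[\pi_V,L]\|, \|[\pi_W,L]\| \le c_1$ via the identity $\pi_U Lx = Lx + (\pi_W L - L\pi_W)x - (\pi_V L - L\pi_V)x$, which forces $c_1 < \delta_0/2$. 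You correctly flagged that $c_1$ must be chosen small relative to $\delta_0$ as well as $\epsilon_0$, which is exactly the condition the paper imposes.
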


\begin{proof} By Proposition~\ref{prop hilbertpseudo}, there is $\epsilon_{0} > 0$ such that $X$ is an isolating neighborhood for any flow generated by $F' = L' + Q'$ with $\bar{\rho}_{X} (L' + Q', L + Q) < \epsilon_{0}$. We want the vector fields \(F_V , F_W \) and \( F_{V,W}\) (see (\ref{eq F_V}) and (\ref{eq F_V,W})) to get close to \(L+Q\), so we will require \(c_2 < \epsilon_0 /2 \) and \(c_1 < \epsilon_0 /4 \) the estimate from Lemma~\ref{lemma F_V,W}.

Next, we would also want that \(\pi_U L_{|U}\) has no kernel. Since $L$ is Fredholm, there is $\delta_0 > 0$ such that $(- \delta_0 , \delta_0 ) \cap \Spec{L} \subset\{0\} $.  For $x \in U$, we have an identity 
\begin{align} \label{identityu1}
        \pi_U L x =  L x + (\pi_U L - L \pi_U) x = L x + (\pi_W L - L \pi_W )x  - (\pi_V L - L \pi_V )x.
\end{align}
Thus, if we require that \(c_1 < \delta_0 /2\), we will have \(\|\pi_U L x \| > 0\) for all nonzero \(x \in U\) as \(U \subset V^\bot \subset (\ker{L})^{\bot}\).

\end{proof}
 
We will denote a collection of subspaces satisfying hypotheses of Proposition~\ref{prop indep} by \(\mathcal{V}_{L,Q}\) without mentioning the constants. To investigate some features of \(\mathcal{V}_{L,Q}\), we begin with the following fact. 

\begin{lemma} Suppose that \(L\) admits an \exhaust sequence. For any finite dimensional subspace $W$ and positive number $\epsilon$, one can find a finite dimensional subspace $E$ such that $W \subset E$ and $\| L \pi_E - \pi_E L \| < \epsilon$.
\label{lemma cofinal}
\end{lemma}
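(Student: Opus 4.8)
The plan is to start from a given \exhaust sequence $\{V_n\}$ for $L$ and, given $W$ and $\epsilon$, produce the desired $E$ by enlarging one term $V_n$ (with $n$ large) to also contain $W$, while controlling the damage done to the commutator estimate. The point is that the conditions in Definition~\ref{def exhaust} already give us $\|[L,\pi_{V_n}]\| < \epsilon/2$ for all sufficiently large $n$; the only issue is that $V_n$ need not contain $W$, and that forcing it to contain $W$ perturbs the projection. So first I would fix $n$ large enough that $\|L\pi_{V_n} - \pi_{V_n} L\| < \epsilon/2$ and set $E := V_n + W$, a finite-dimensional subspace containing $W$.

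The heart of the argument is to show that $\|\pi_E - \pi_{V_n}\|$ can be made small, or more precisely that $\|[L,\pi_E]\|$ stays below $\epsilon$. The difference $\pi_E - \pi_{V_n}$ is the orthogonal projection onto the part of $W$ not already captured by $V_n$ — explicitly, $E = V_n \oplus (E \cap V_n^\bot)$, and $\dim(E\cap V_n^\bot) \le \dim W$. The key observation is that, since $\pi_{V_n}\to 1$ pointwise and $W$ is finite-dimensional, $\pi_{V_n}$ is close to the identity \emph{uniformly on the unit sphere of $W$} (a pointwise-to-uniform upgrade on a compact set, exactly as in the proof of Lemma~\ref{lemma cptpointwise}), so $(1-\pi_{V_n})|_W$ has small norm; consequently the rank-$\le \dim W$ operator $\pi_E - \pi_{V_n}$ has small operator norm, say $\|\pi_E-\pi_{V_n}\| < \epsilon/(4\|L\|+1)$, once $n$ is taken large enough. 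Then
\[
\|[L,\pi_E]\| \le \|[L,\pi_{V_n}]\| + \|[L,\pi_E-\pi_{V_n}]\| < \tfrac{\epsilon}{2} + 2\|L\|\cdot\|\pi_E-\pi_{V_n}\| < \epsilon,
\]
which is what we want.

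I expect the main obstacle to be the uniform smallness of $(1-\pi_{V_n})|_W$ and hence of $\|\pi_E - \pi_{V_n}\|$: one must justify that pointwise convergence $\pi_{V_n}\to 1$ implies norm convergence when restricted to a fixed finite-dimensional subspace, and then argue that adjoining such a "small" subspace to $V_n$ only moves the orthogonal projection by a correspondingly small amount in operator norm. The first part is a standard compactness argument on the unit sphere of $W$; the second is a short linear-algebra estimate — if $P$ is the projection onto a subspace $Z$ with $\|(1-\pi_{V_n})|_Z\|$ small and $Z$ has dimension $\le \dim W$, then the new rank added to $\pi_{V_n}$ is nearly contained in $V_n$, so the projection onto $V_n + Z$ differs from $\pi_{V_n}$ by little. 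Both steps are routine once set up, so the lemma reduces to choosing $n$ large enough to beat the two constants $\epsilon/2$ and $\epsilon/(4\|L\|+1)$ simultaneously.
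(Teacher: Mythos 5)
Your argument has a genuine gap at its central step. You set $E = V_n + W$ and claim that $\|\pi_E - \pi_{V_n}\|$ can be made small once $\|(1-\pi_{V_n})|_W\|$ is small. This is false: $\pi_E - \pi_{V_n}$ is itself the orthogonal projection onto the subspace $E \cap V_n^\bot = (1-\pi_{V_n})(W)$, and a nonzero orthogonal projection has operator norm exactly $1$. So unless $W$ already lies inside $V_n$, you get $\|\pi_E - \pi_{V_n}\| = 1$ no matter how large $n$ is. A two-dimensional example makes this vivid: take $V_n = \mathrm{span}(e_1)$ and $W = \mathrm{span}(e_1 + \delta e_2)$; then $\|(1-\pi_{V_n})|_W\| \approx \delta$ is as small as you like, but $E = \mathbb{R}^2$ and $\pi_E - \pi_{V_n}$ is the projection onto $\mathrm{span}(e_2)$, of norm $1$. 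In the same way one cannot control $[L,\pi_Z]$ for $Z=(1-\pi_{V_n})(W)$: the subspace $Z$ having small dimension and being spanned by short vectors says nothing about how $L$ acts on it, and $\|[L,\pi_Z]\|$ can be on the order of $\|L\|$. The ``short linear-algebra estimate'' you defer to simply does not hold.

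The repair, which is what the paper does, is to \emph{tilt} a term of the exhausting sequence rather than \emph{enlarge} it. Pick $E_n$ so large that both $\|[L,\pi_{E_n}]\|$ and $\sup_{x\in B(W,1)}\|(1-\pi_{E_n})x\|$ are small, decompose $E_n = \pi_{E_n}(W) \oplus U$, and set $E := W + U$. Because $\pi_{E_n}|_W$ is then injective, $E$ has the same dimension as $E_n$; the two subspaces share $U$ and differ only in replacing $\pi_{E_n}(W)$ by the nearby subspace $W$, so $\|\pi_E - \pi_{E_n}\|$ really is small and the commutator estimate closes. Your first ingredient --- the pointwise-to-uniform upgrade of $\pi_{V_n}\to 1$ on the compact unit sphere of the finite-dimensional $W$ --- is correct and is exactly what the paper uses; the error is solely in choosing $E = V_n + W$ instead of the same-dimensional tilt.
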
 
\begin{proof} Given $W$ and $\epsilon_1 , \epsilon_2 > 0$, we can pick a subspace $E_n$ from an \exhaust sequence with sufficiently large $n$ so that
\[  \sup_{x \in B(W,1)} \left\| (1 - \pi_{E_n})x  \right\| < \epsilon_1 \quad \text{and} \quad \| L \pi_{E_n} - \pi_{E_n} L \| < \epsilon_2 . \]
The first inequality says that the image $\pi_{E_n}(W)$ is sufficiently close to $W$. We can decompose $E_n = \pi_{E_n}(W) \oplus U$ and take $E$ to be the subspace spanned by $W$ and $U$. It is not hard to check that $\| \pi_E - \pi_{E_n} \| < \epsilon_1$. When $\epsilon_1 , \epsilon_2$ are sufficiently small, we have $ \| L \pi_E - \pi_E L \| < \epsilon_2 + \epsilon_1 \|L \| < \epsilon $.

% \[ \| (L \pi_W - \pi_W L)v \| = \| \pi_{W^\bot}v \| \leq \| \pi_{V_1^\bot}v \| = \| (L \pi_{V_1} - \pi_{V_1} L)v \|   \]
\end{proof}

For a subspace $V$ in  \(\mathcal{V}_{L,Q}\), we define a spectrum \(E(X,F,L,V)\) by 
\begin{align*} E(X,F,L,V) := \Sigma^{-V^-} \cindex{ X \cap V , \eta_V}
%E(X,F,L,V)(W) = \begin{cases} \Sigma^{W-V} \cindex{ X \cap V , \phi_V}  &\text{if }V\subset W, \\ * & \text{otherwise.} \end{cases}
 \end{align*} 
where we apply the spectrification functor to \(\cindex{ X \cap V , \eta_V}\) and desuspend by \(V^-\), the negative eigenspace with respect to \(\pi_V L_{|V}\). This is well-defined up to canonical homotopy equivalence when concerning the choice of index pairs. The idea of desuspending the Conley index by the negative eigenspace was used by Manolescu in \cite{Man1}. 

We now prove invariance of $E(X,F,L,V)$ with respect to the choice of \(V \in \mathcal{V}_{L,Q}\).

\begin{corollary} Suppose that $L$ admits an \exhaust sequence. Then $E(X,F,L,V_1) \cong E(X,F,L,V_2)$ when \(V_1 , V_2 \in  \mathcal{V}_{L,Q} \).
\label{cor invariance}
\end{corollary}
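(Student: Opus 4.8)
The plan is to reduce the comparison of two arbitrary subspaces $V_1,V_2\in\mathcal{V}_{L,Q}$ to the comparison of a nested pair $V\subseteq E$, to settle the nested case with Proposition~\ref{prop suspindex}, and then to keep track of the desuspensions by a short linear-algebra lemma. First I would produce a common enlargement. Applying Lemma~\ref{lemma cofinal} — this is the only place the \exhaust sequence hypothesis is used — to the finite-dimensional subspace $W:=V_1+V_2$ and a sufficiently small $\epsilon$, I obtain a finite-dimensional subspace $E\supseteq V_1+V_2$ with $\|L\pi_E-\pi_E L\|$ as small as desired. I then check $E\in\mathcal{V}_{L,Q}$: condition (i) holds because $\ker L\subseteq V_1\subseteq E$; condition (ii) holds by the choice of $\epsilon$ (take $\epsilon\le c_1$); and condition (iii) is automatic, since $V_1\subseteq E$ gives $\pi_E\pi_{V_1}=\pi_{V_1}$, hence $1-\pi_E=(1-\pi_E)(1-\pi_{V_1})$ and
\[ \sup_{x\in X}\left\|(1-\pi_E)Qx\right\|\;\le\;\sup_{x\in X}\left\|(1-\pi_{V_1})Qx\right\|\;\le\;c_2 . \]
(The same monotonicity shows that condition (iii) never obstructs enlarging a subspace; only condition (ii) is delicate, which is exactly what Lemma~\ref{lemma cofinal} takes care of.)

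By transitivity of stable equivalence it now suffices to show $E(X,F,L,V)\cong E(X,F,L,E)$ whenever $V\in\mathcal{V}_{L,Q}$ and $V\subseteq E$; applying this once with $V=V_1$ and once with $V=V_2$ proves the corollary. Write $E=V\oplus U$ with $U:=E\cap V^{\bot}$. By Proposition~\ref{prop indep} the form $\pi_U L_{|U}$ is nondegenerate and the isolating-neighborhood hypothesis of Proposition~\ref{prop suspindex} is satisfied, so that proposition gives
\[ \cindex{X\cap E,\eta_E}\;\cong\;\cindex{X\cap V,\eta_V}\wedge S^{U^-}, \]
where $U^-$ is the negative eigenspace of $\pi_U L_{|U}$.

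The crux is then the identity
\[ \dim E^-\;=\;\dim V^-+\dim U^-, \]
where $V^-$ and $E^-$ are the negative eigenspaces of the compressed linear parts $\pi_V L_{|V}$ and $\pi_E L_{|E}$ that appear in the definition of the two spectra. Since $\ker L$ is contained in $V$ (hence in $E$) and is orthogonal to $U$, each of $\pi_V L_{|V}$ and $\pi_E L_{|E}$ has kernel exactly $\ker L$ — this uses $\ker L\subseteq V$ together with the spectral gap $\delta_0$ of $L$ and estimate (\ref{eq sqrt2LV}) — so I would pass to the orthogonal complement of $\ker L$, on which these forms become invertible. Writing $\pi_E L_{|E}$ as a block operator on $(V\ominus\ker L)\oplus U$, its diagonal blocks are the restrictions of $\pi_V L_{|V}$ and $\pi_U L_{|U}$, whose nonzero spectra are bounded away from $0$ by a constant comparable to $\delta_0$ (via (\ref{identityu1}) and (\ref{eq sqrt2LV})), while the off-diagonal block has norm at most $c_1/\sqrt 2$ by (\ref{eq sqrt2LV}) and condition (ii). Shrinking $c_1$ if necessary, the straight-line homotopy from $\pi_E L_{|E}$ to its block-diagonal part stays within the invertible self-adjoint operators, so the dimension of the negative eigenspace is constant along it, which yields the identity. (Equivalently, one checks directly that $V^-\oplus U^-$ is a maximal negative definite subspace for $\pi_E L_{|E}$.)

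Finally, since in the category of coordinate-free $H$-spectra suspension and desuspension by a finite-dimensional subspace depend, up to equivalence of functors, only on the dimension of that subspace, the displayed identity gives
\begin{align*}
E(X,F,L,E)=\Sigma^{-E^-}\cindex{X\cap E,\eta_E} &\cong \Sigma^{-E^-}\Sigma^{U^-}\cindex{X\cap V,\eta_V} \\
&\cong \Sigma^{-V^-}\cindex{X\cap V,\eta_V} = E(X,F,L,V),
\end{align*}
as required. I expect the dimension count in the third paragraph to be the real obstacle: the compressed form $\pi_E L_{|E}$ is not block-diagonal with respect to $V\oplus U$ and has a genuine kernel ($\ker L$), so the number of negative directions has to be teased out carefully, and the quantitative conditions defining $\mathcal{V}_{L,Q}$ — especially the containment $\ker L\subseteq V$ — are precisely what keep the off-diagonal interaction too small to push any eigenvalue across $0$.
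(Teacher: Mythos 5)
Your argument is correct and follows essentially the same route as the paper: reduce to a nested pair $V\subseteq W$ via a common enlargement supplied by Lemma~\ref{lemma cofinal}, invoke Proposition~\ref{prop suspindex}, and then compare negative eigenspace dimensions. The only difference is that the paper simply asserts the dimension identity (in the equivalent form $V^+\oplus U\cong W^+\oplus U^-$ after suspending by $S^W$), whereas you supply the block-operator homotopy argument establishing $\dim E^-=\dim V^-+\dim U^-$; this is a welcome elaboration of a step the paper leaves implicit.
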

\begin{proof}
We first consider the case \(V , W \in  \mathcal{V}_{L,Q} \) with \(W = V \oplus U\). This is the same setup as in the hypothesis of Proposition~\ref{prop suspindex}. Hence, we see that \(E(X,F,L,V)\) and \(E(X,F,L,W)\) are equivalent by suspending both with \(S^W\) and using the fact that
\[ V^+ \oplus U \cong W^+ \oplus U^- . \]

For general \(V_1 , V_2 \in \mathcal{V}_{L,Q}\), we only need to find a subspace \(W \in \mathcal{V}_L\) which contains both \(V_1\) and \(V_2\). By Lemma~\ref{lemma cofinal}, we can find a subspace \(W\) that contains the span of \(V_1\) and \(V_{2}\) and satisfies $ \|\pi_W L - L \pi_W \| \leq c_1  $. It is easy to check that  \(W \in \mathcal{V}_{L,Q}\) and the proof is complete.  

Note that Lemma~\ref{lemma cofinal} also ensures that \(\mathcal{V}_{L,Q}\) is nonempty. Moreover, we just showed that \(\mathcal{V}_{L,Q}\) is cofinal under inclusion of subspaces. 
\end{proof}

\begin{remark} At this point, we need to assume that $L$ admits an \exhaust sequence. Without Lemma~\ref{lemma cofinal}, we cannot guarantee that two Conley indices are equivalent. 
\end{remark}

At this point, we may consider $E(X,F,L,V)$ without referring to a subspace \(V\), but it might still depend on the choice of permissible decompositions. For a different \goodvf decomposition \(F = L' + Q'\), it is not hard to see that \(\mathcal{V}_{L,Q} \cap \mathcal{V}_{L',Q'}\) is nonempty.
This follows from the fact that \(L'-L\) is compact and that we have an identity (cf. (\ref{eq sqrt2LV}))
\begin{align*}
 \left\| \pi_V (L' -L) - (L' - L) \pi_V \right\| 
\leq  \sqrt{2} \left\| (1-\pi_V) (L' -L) \right\|.
\end{align*}

As discussed earlier, the Conley index $\cindex{ X \cap V , \eta_V}$ itself does not depend on the choice of \(L\) or the choice of decompositions,  but it is only the negative subspace \(V^-\) used in desuspension that depends on \(L\). For \(V \in \mathcal{V}_{L,Q} \cap \mathcal{V}_{L',Q'} \), we see that the spectra \(E(X,F,L,V)\) and \(E(X,F,L',V)\) differ only by  a sphere suspension of  certain dimension. One may view the choice of \(L\) involved here as a grading for an object \(E(X,F)\).
 
\begin{example} We now consider a linear flow in infinite-dimensional setting. Let $L$ be a self-adjoint operator and $\eta$ be the flow generated by $L$. Suppose that $L$ has no kernel and its spectrum is simple so that $L$ admits an \exhaust sequence and there is a decomposition $H = \bigoplus V_{\lambda}$ as a sum of 1-dimensional eigenspaces.

Like the finite-dimensional analog, the origin is an isolated invariant set for $\eta$ and we can pick a corresponding isolating neighborhood $X$ to be the unit ball in $H$. Note that this ball is closed and bounded but infinite-dimensional. 

Let $V = \sum_{i=1}^n V_{\lambda_i}$ be a finite sum of eigenspaces. Clearly, $V$ satisfies the hypothesis of Proposition~\ref{prop indep}, so that we can consider a compressed flow on $V$ and its Conley index. From Example~\ref{ex linearflow}, the Conley index $\cindex{ X \cap V , \eta_V}$ is has a homotopy type of $S^{n^-}$, where $n^-$ is a number of negative eigenvalues of $L$ restricted to $V$. Therefore, 
$$E(X,L,L) \cong S^0 .$$ 

On the other hand, we can consider a perturbation of $L$ by a finite-rank operator that switches the sign of an eigenvalue. More specifically, suppose that $\lambda_1$ is negative and that $L'$ is an operator given by $L' = -L$ on $V_{\lambda_1}$ and $L' = L$ on its orthogonal complement. Then $K = L - L'$ is finite-rank and $L = L' + K$ is another \goodvf decomposition of $L$. However, the number of negative eigenvalues of $L'$ in $V$ is now $n^- - 1$, so instead 
$$E(X,L,L') \cong S^1 . $$ 
  
\end{example}

Lastly, we will consider a \goodvf vector field near \(F\), that is a vector field $F' = L' + Q'$ such that $\bar{\rho}_{X} (L' + Q', L + Q)$ is sufficiently small. It follows that \(X\) is an isolating neighborhood for \(F'\) and we will be able to use a subspace \(V \in \mathcal{V}_{L,Q}\) to construct its stable Conley index. 

Let us consider difference between \(F\) and a compression of \(F'\) on $V$ using the pseudometric $\bar{\rho}_X$. The linear part term has the following estimate
 \begin{align*} & \|L -  \pi_V L' \pi_V - (1-\pi_V) L' (1-\pi_V )\|  \\
  &\leq  \left\| \pi_V L - L \pi_V \right\|  + \left\| \pi_V (L-L') \pi_V + (1-\pi_V) (L-L') (1-\pi_V )\right\| \\
  &\leq \left\| \pi_V L - L \pi_V \right\| + 2\left\|L' -L  \right\| .    \end{align*}
For the compact part term, we have
\begin{align*}\sup_{x \in X} \left\| Q - \pi_VQ'x  \right\| &\leq \sup_{x \in X} \left\| (1 - \pi_V)Qx  \right\| + \sup_{x \in X} \left\| \pi_V (Q-Q')x  \right\| \\ 
&\leq \sup_{x \in X} \left\| (1 - \pi_V)Qx  \right\| + \sup_{x \in X} \left\|  (Q-Q')x  \right\|.  \end{align*}
Thus, when the terms $\bar{\rho}_{X} (L' + Q', L + Q)$, $\|[L,\pi_V ]\|$, and $\sup_{x \in X} \left\| (1 - \pi_V)Qx  \right\|$ are sufficiently small, $X$ is an isolating neighborhood for the compression of $F'$ on $V$ and we obtain the Conley index $\cindex{ X \cap V , {\eta'}_V}$ for $V \in \mathcal{V}_{L,Q}$. It is also clear that $X$ is an isolating neighborhood for the family $s F_V + (1-s) F'_V$ parametrized by the interval, so that $\cindex{ X \cap V , \eta'_V} \cong \cindex{ X \cap V , \eta_V}$ by continuation of the Conley index. When $\left\|L' -L  \right\|$ is sufficiently small, the signature of \(\pi_V L'_{|V}\) is the same as \(\pi_V L_{|V}\). Therefore,
$$E(X,F' , L' ) \cong E(X,F,L)$$

When $L'$ also admits an \exhaust sequence, we can show that $\mathcal{V}_{L' ,Q'}$ and $\mathcal{V}_{L,Q}$ have nonempty intersection. For instance, we can choose $V$ that contains both \(\ker{L}\) and \(\ker{L'}\). Then, we can choose $V,W \in \mathcal{V}_{L,Q}$ so that \(X\) is an isolating neighborhood for the family of flows generated by \(s F'_{V,W} + (1-s)F'_W\). Finally, we observe that
% \begin{align*}\left\| \pi_V L - L \pi_V \right\| &\leq \left\| \pi_V L' - L' \pi_V \right\|  + \left\| \pi_V (L' -L) - (L' -L) \pi_V \right\| \\
% &\leq \left\| \pi_V L' - L' \pi_V \right\|  + 2\left\|L' -L  \right\| \end{align*} 
% and, similarly,
% \begin{align*}\sup_{x \in X} \left\| (1 - \pi_V)Qx  \right\| \leq \sup_{x \in X} \left\| (1 - \pi_V)Q'x  \right\| + \sup_{x \in X} \left\| (Q-Q')x  \right\| \end{align*}
\begin{align*}
        \pi_U L' x &= \pi_U (L' - L) x  + \pi_U L x \\
        &= \pi_U (L' - L) x  + L x + (\pi_W L - L \pi_W ) x - (\pi_V L - L \pi_V ) x ,
\end{align*}
so that \(\pi_V L'_{|U}\) is nondegenerate when the terms \(\|L'-L\| ,  \|[L,\pi_V ]\| \), and \( \|[L,\pi_W ]\|\) are sufficiently small. Hence, we can conclude that the intersection of  $\mathcal{V}_{L' ,Q'}$ and $\mathcal{V}_{L,Q}$ is nonempty and have the following conclusion.

\begin{corollary} $E(X,F)$ is invariant under continuation of vector fields.
\end{corollary}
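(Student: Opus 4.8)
The plan is to show that $E(X,F_\lambda)$ is locally constant as $\lambda$ runs over a continuation and then to invoke connectedness of the parameter interval. By a \emph{continuation} I understand a \goodvf family $\{F_\lambda\}_{\lambda\in[0,1]}$, with decompositions $F_\lambda=L_\lambda+Q_\lambda$, such that $X$ is an isolating neighborhood for every flow $\eta_\lambda$ and every $L_\lambda$ admits an \exhaust sequence, so that the graded object $E(X,F_\lambda)$ is defined for each $\lambda$ by Corollary~\ref{cor invariance}; the goal is $E(X,F_0)\cong E(X,F_1)$. It is enough to prove that every $\lambda_0$ has a neighborhood $N$ in $[0,1]$ with $E(X,F_\lambda)\cong E(X,F_{\lambda_0})$ for all $\lambda\in N$: then the partition of $[0,1]$ into stable equivalence classes is open, $[0,1]$ is connected, and transitivity finishes the job. (If one instead builds continuity of $\lambda\mapsto F_\lambda$ in the pseudometric $\bar{\rho}_X$ into the definition of a continuation, the local step is immediate from the ``nearby vector field'' discussion preceding the corollary together with compactness of $[0,1]$; but a \goodvf family need not be $\bar{\rho}_X$-continuous, so I treat the general case.)

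First I would fix $\lambda_0$ and manufacture one finite-dimensional subspace $W$ that serves to compute $E(X,F_\lambda)$ for all $\lambda$ in a suitable neighborhood $N$ of $\lambda_0$. Using Lemma~\ref{lemma cofinal} for $L_{\lambda_0}$, begin with a subspace that contains $\ker L_{\lambda_0}$ and on which $\|[\pi_W,L_{\lambda_0}]\|$ is as small as we please; then enlarge it, using compactness of $Q$ and hence precompactness of $Q\big(X\times\overline{B}(\lambda_0,r)\big)$, so that $\sup\{\|(1-\pi_W)Q_\lambda(x)\|:x\in X,\ \lambda\in\overline{B}(\lambda_0,r)\}$ is also as small as we please — the key being that this is a uniform finite-dimensional approximation of a \emph{precompact} set, which is available even though the compact parts $Q_\lambda$ need not converge to $Q_{\lambda_0}$ uniformly on $X$. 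Next, shrink $N\subseteq\overline{B}(\lambda_0,r)$, using norm-continuity of $\lambda\mapsto L_\lambda$, so that $\|L_\lambda-L_{\lambda_0}\|$ is small enough to keep $\|[\pi_W,L_\lambda]\|$ and $\sup_{x\in X}\|(1-\pi_W)Q_\lambda(x)\|$ below the thresholds $c_1,c_2$ of Proposition~\ref{prop indep} associated to $(L_{\lambda_0},Q_{\lambda_0},X)$, for every $\lambda\in N$. The only condition in that proposition that $W$ may fail relative to $(L_\lambda,Q_\lambda)$ is containing $\ker L_\lambda$; but that requirement is used there only to force the nondegeneracy of $\pi_U L_{|U}$ for complementary subspaces $U$, and for $U=W^\bot\subseteq(\ker L_{\lambda_0})^\bot$ one still has $\|L_{\lambda_0}x\|\ge\delta_0\|x\|$ from the spectral gap $(-\delta_0,\delta_0)\cap\Spec{L_{\lambda_0}}\subseteq\{0\}$, so the needed nondegeneracy persists under the small perturbation from $L_{\lambda_0}$ to $L_\lambda$ after one more shrink of $N$. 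Consequently $X\cap W$ is an isolating neighborhood for $(\eta_\lambda)_W$, $W$ computes $E(X,F_\lambda)$, and
\[ E(X,F_\lambda)\ \cong\ \Sigma^{-W^-_\lambda}\,\cindex{X\cap W,(\eta_\lambda)_W}\qquad(\lambda\in N), \]
where $W^-_\lambda$ denotes the negative eigenspace of $\pi_W L_\lambda|_W$.

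It then remains to compare the two factors for $\lambda\in N$ with those for $\lambda_0$. For the Conley-index factor, the restrictions of the compressed vector fields $(F_\lambda)_W$ to $W$ form a continuous family of flows on the \emph{fixed} finite-dimensional space $W$, and by the previous paragraph $(X\cap W)\times[\lambda_0,\lambda]$ is an isolating neighborhood for the corresponding family; hence the classical continuation property gives $\cindex{X\cap W,(\eta_\lambda)_W}\cong\cindex{X\cap W,(\eta_{\lambda_0})_W}$. For the suspension factor, after a final shrink of $N$ we may take $\|L_\lambda-L_{\lambda_0}\|$ so small that no eigenvalue of $\pi_W L_{\lambda_0}|_W$ changes sign under the perturbation to $\pi_W L_\lambda|_W$ — the relevant nonzero eigenvalues being bounded away from $0$ by the same spectral-gap estimate — so $\dim W^-_\lambda=\dim W^-_{\lambda_0}$ and $\Sigma^{-W^-_\lambda}\cong\Sigma^{-W^-_{\lambda_0}}$ canonically. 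Putting these together yields $E(X,F_\lambda)\cong E(X,F_{\lambda_0})$ on $N$, hence local constancy, and then connectedness of $[0,1]$ gives $E(X,F_0)\cong E(X,F_1)$.

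I expect the main obstacle to be exactly the construction of the neighborhood-independent subspace $W$ in the second paragraph. Unlike the fixed-endpoint situation, a \goodvf family need not be continuous in $\bar{\rho}_X$ — the compact parts may fail to converge uniformly on $X$ — and the kernels $\ker L_\lambda$ need not lie in a common finite-dimensional subspace, so one cannot simply quote the nearby-vector-field comparison $E(X,F',L')\cong E(X,F,L)$. The way through is to let compactness of $Q$ carry the compact parts (uniform finite-dimensional approximation of the precompact set $Q(X\times N)$) and to let norm-continuity together with the Fredholm/spectral-gap property of $L_{\lambda_0}$ carry the linear parts and the grading. Everything after $W$ is in hand is routine: finite-dimensional Conley continuation on $W$ and a count of signatures.
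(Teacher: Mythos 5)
Your overall route — local constancy along a permissible family, then connectedness of the parameter interval — is the same as the paper's, but you resolve the local step in a genuinely different way. The paper's discussion preceding the corollary compares vector fields that are $\bar{\rho}_X$-close: it perturbs $L$ and $Q$ by operators small in norm and in $\sup_X$, produces a common $V\in\mathcal{V}_{L,Q}\cap\mathcal{V}_{L',Q'}$, and applies continuation on $V$. You correctly observe that a \goodvf family need not be $\bar{\rho}_X$-continuous in $\lambda$ (joint compactness of $Q$ on $H\times\Lambda$ does not force $Q_\lambda\to Q_{\lambda_0}$ uniformly on $X$), so you replace the $\bar{\rho}_X$-closeness hypothesis by the joint compactness hypothesis itself: the precompactness of $Q\bigl(X\times\overline{B}(\lambda_0,r)\bigr)$ gives a single finite-dimensional $W$ with $\sup\{\|(1-\pi_W)Q_\lambda x\|: x\in X,\ \lambda\in N\}$ small for the whole neighborhood. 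That is a cleaner mechanism, and together with norm-continuity of $L_\lambda$ it yields an $N$-uniform choice of approximating subspace. The paper's version is shorter but, read literally, only covers $\bar{\rho}_X$-continuous paths; your version covers the full class of permissible families.

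Two small points deserve tightening. First, the order in which you build $W$ is backwards: you fix a small commutator $\|[\pi_W,L_{\lambda_0}]\|$ and \emph{then} enlarge $W$ to absorb $Q$, but enlarging can ruin the commutator bound. Do it the other way around — first pick $W_0\supseteq\ker L_{\lambda_0}$ with $\sup\|(1-\pi_{W_0})Q_\lambda x\|$ small, then apply Lemma~\ref{lemma cofinal} to find $W\supseteq W_0$ with $\|[\pi_W,L_{\lambda_0}]\|$ small; the $Q$-estimate only improves under $W\supseteq W_0$. Second, your claim that ``$W$ computes $E(X,F_\lambda)$'' even though $W$ may miss $\ker L_\lambda$ is essentially right but is stated too briefly: what you actually need is that for some $W''\in\mathcal{V}_{L_\lambda,Q_\lambda}$ with $W\subseteq W''$, the hypotheses of Proposition~\ref{prop suspindex} hold for the pair $(W,W'')$ relative to $F_\lambda$, so that $\Sigma^{-W^-_\lambda}\cindex{X\cap W,(\eta_\lambda)_W}\cong E(X,F_\lambda,L_\lambda,W'')$. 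Your spectral-gap estimate for $U=W''\ominus W\subseteq(\ker L_{\lambda_0})^\bot$ delivers the nondegeneracy of $\pi_U L_\lambda|_U$ after shrinking $N$; combine that with Lemma~\ref{lemma cofinal} for $L_\lambda$ to produce $W''$ and the argument closes. These are gaps in exposition rather than in the underlying idea — the proof is sound once they are filled in.
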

  
%In summary, the spectrum \(E(X,F)\) is well-defined and invariant

%%%%%%%%%%%%%%%%%%%%%%%%%%%%%%%%%%%%%%%%%%%%%%%%%%%%%%%%%%%%%%%%%%%%%%

%\section{Semi-infinite Conley index}
At the end, we mention another possible formulation for the stable Conley index. Observe the relation (\ref{eq indexsusp}) again
 \begin{align*} \cindex{X \cap W , \eta_W} \cong \cindex{ X \cap V , \eta_V} \wedge S^{U^-} . \end{align*}
We see that the Conley index does not change if we add $V$ by any positive definite subspace. In other words, one could define a Conley index of a compressed flow on a semi-infinite dimensional subspace.

A slightly different result of the Conley index on an infinite-dimensional space was developed by Rybakowski \cite{Rybakowski}. One of the necessary hypotheses for this setup can be roughly viewed as compactness of the exit set of an isololating neighborhood. For such an isolating neighborhood, it turns out that the Conley index can be defined and behaved in essentially the same way as in the finite-dimensional case. 

Let $H^+$ be a positive eigenspace with respect to $L$ and $F = L + Q$ be a permissible vector field. When $X \cap H^+$ is an isolating neighborhood with respect to a flow generated by $\pi_{H^+} F$ restricted to $H^+$, we can show that $X \cap H^+$ satisfies Rybakowski's hypotheses by using the same argument as in Proposition~\ref{properh}. This way, we can obtain the Conley index $\cindex{X \cap H^+ , \eta_{H^+}}$. 

The collection of relevant subspaces can be characterized as a positive eigenspace of $L'$ for any operator differed from $L$ by a compact operator. This suggests that we look at a semi-infinite subspace that lies in the same polarization class as $H^+ \oplus H^- $. Consequently, we could form a semi-infinite spectrum (cf. \cite{Doug}). However, we cannot guarantee that $X \cap H^+$ is an isolating neighborhood for all such $H^+$, so it might be difficult to define a proper object in a stable category this way. Moreover, the development of theory of semi-infinite spectra is also at an early stage. 

% Since the suspension depends on the negative eigenspace $L$, a notion of semi-infinite spectra \cite{Doug} might be a more suitable category for the spectrum \(E(X,F)\). The Conley index on semi-infinite space can be developed following the work of Rybakowski~\cite{Rybakowski}. 

%%%%%%%%%%%%%%%%%%%%%%%%%%

%\begin{singlespace}
\bibliography{research}
\bibliographystyle{plain}
%\end{singlespace}

%\bibliography{research}{}
%\bibliographystyle{amsplain}

\end{document}